\newcommand{\abstractin}[1]{%
  \otherlanguage{#1}%
  \item[\hskip\labelsep\scshape\abstractname.]%
}
 \renewcommand{\epsilon}{\varepsilon}
\newcommand{\Z}{{\mathbb Z}}
\newcommand{\R}{{\mathbb R}}
\newcommand{\C}{{\mathbb C}}
\newcommand{\Def}{\operatorname{def}}
\newcommand{\cA}{{\mathcal A}}
\newcommand{\cB}{{\mathcal B}}
 \newtheorem{Theorem}{Theorem}
 \newtheorem{Corollary}[Theorem]{Corollary}
  \newtheorem{Lemma}[Theorem]{Lemma}
\newtheorem{Proposition}[Theorem]{Proposition}
\theoremstyle{remark}
 \newtheorem{Remark}{Remark}
\newtheorem{Definition}{Definition}
\begin{document}

\title{Discrete Invariants of Generically Inconsistent Systems of Laurent Polynomials}
\author{Leonid Monin}

\date{\today}
\maketitle
\begin{abstract}
\abstractin{english}
Let $ \mathcal{A}_1, \ldots, \mathcal{A}_k $ be finite sets in $ \mathbb{Z}^n $ and let $ Y \subset (\mathbb{C}^*)^n $ be an algebraic variety defined by a system of equations
\[
f_1 = \ldots = f_k = 0,
\]
where $ f_1, \ldots, f_k $ are Laurent polynomials with supports in $\mathcal{A}_1, \ldots, \mathcal{A}_k$. Assuming that $ f_1, \ldots, f_k $ are sufficiently generic, the Newton polyhedron theory computes discrete invariants of $ Y $ in terms of the Newton polyhedra of $ f_1, \ldots, f_k $. It may appear that the generic system with fixed supports $ \mathcal{A}_1, \ldots, \mathcal{A}_k $ is inconsistent. In this paper, we compute discrete invariants of algebraic varieties defined by systems of equations which are generic {\em in the set of consistent system} with support in $ \mathcal{A}_1, \ldots, \mathcal{A}_k$ by reducing the question to the Newton polyhedra theory.  Unlike the classical situation, not only the Newton polyhedra of $f_1,\dots,f_k$, but also the supports $\cA_1,\dots,\cA_k$ themselves appear in the answers.

\end{abstract}

\section{Introduction.}
With a Laurent polynomial $f$  in $n$ variables one can associate its support $supp(f)\subset \Z^n$ which is the set of exponents of monomials having non-zero coefficient in $f$ and its Newton polyhedra $\Delta(f)\subset \R^n$ which is the convex hull of the support of $f$ in $\R^n$.  Consider an algebraic variety  $Y\subset (\C^*)^n$ defined by a system of equations 
\begin{equation}\label{eq}
f_1=\dots=f_k=0,
\end{equation}
 where $f_1,\dots, f_k$ are Laurent polynomials with the supports in finite sets $\cA_1, \ldots, \cA_k \subset \Z^n$. The Newton polyhedra theory computes invariants of $Y$ assuming that the system (\ref{eq}) is generic enough. That is, there exists a proper algebraic subset $\Sigma$ in the space $\Omega$ of $k$-tuples of Laurent polynomials $f_1,\ldots,f_k$  such that  the corresponding discrete invariant is constant in $\Omega \setminus \Sigma$ and could be computed in terms of polyhedra $\Delta_1,\ldots,\Delta_k$. One of the first examples of such result is the Bernstein-Kouchnirenko-Khovanskii theorem (see \cite{B}). 

\begin{Theorem}[BKK]
Let $f_1, \ldots, f_n$ be generic Laurent polynomials with supports in $\cA_1, \ldots, \cA_n$. Then all solutions  of the system $f_1 = \ldots = f_n = 0$ in $(\C^*)^n$  are non-degenerate and  the number of solutions is
$$
n! Vol(\Delta_1,\ldots,\Delta_n),
$$
where $\Delta_i$ is the convex hull of $\cA_i$ and $Vol$ is the mixed volume.
\end{Theorem}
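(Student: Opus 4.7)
The plan is to pass from the torus $(\C^*)^n$ to a smooth projective toric compactification $X$ whose fan simultaneously refines the inner normal fans of $\Delta_1,\dots,\Delta_n$. On such an $X$ each polytope $\Delta_i$ determines a globally generated line bundle $L_i$, and each Laurent polynomial $f_i$ extends (after clearing a monomial denominator) to a global section $s_i\in H^0(X,L_i)$ whose restriction to the open torus is $f_i$. The number of torus solutions of the system will then be computed as the intersection number $L_1\cdots L_n$ on $X$, and this intersection number identified with $n!\,Vol(\Delta_1,\dots,\Delta_n)$.

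First I would establish a Bertini-type genericity statement: for $(f_1,\dots,f_n)$ in a Zariski-open subset of the parameter space $\Omega$, the hypersurfaces $\{s_i=0\}$ meet transversally and their intersection is entirely contained in the open torus $(\C^*)^n$. The key observation is that on each toric orbit $O_\sigma\subset X$ of codimension at least one, the restriction of $s_i$ depends only on the coefficients supported on the face of $\cA_i$ selected by $\sigma$; a dimension count on each stratum, combined with a standard Bertini argument applied to each orbit, shows that a generic choice pushes all zeroes off of the boundary. Transversality inside the torus then follows from another Bertini application to the map $\Omega \times (\C^*)^n \to \C^n$, $(f,x)\mapsto (f_1(x),\dots,f_n(x))$. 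Once transversality and torus-containment are in place, the set-theoretic count equals the intersection number $L_1\cdots L_n$ on $X$.

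Next I would prove $L_1\cdots L_n = n!\,Vol(\Delta_1,\dots,\Delta_n)$. Both sides are symmetric and multilinear in the polytopes: intersection numbers are multilinear in line bundles, mixed volume is multilinear in polytopes, and the assignment $\Delta\mapsto L_\Delta$ sends Minkowski sum to tensor product. By polarization it suffices to establish the single-polytope identity $L_\Delta^n = n!\,Vol(\Delta)$. This is a consequence of asymptotic Riemann--Roch together with the Ehrhart identity
\[
\dim H^0(X,L_\Delta^{\otimes k}) \;=\; \#(k\Delta\cap\Z^n) \;=\; Vol(\Delta)\,k^n+O(k^{n-1}),
\]
which reads off the top-degree coefficient of the Hilbert polynomial as $L_\Delta^n/n!$.

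The main obstacle is the genericity statement in the second paragraph. Verifying that for generic coefficients the common zero locus truly avoids every boundary stratum of the toric compactification requires a stratum-by-stratum analysis: one must rule out that some coincidence among the face polynomials of the $f_i$ forces a positive-dimensional excess intersection on a boundary orbit. This is precisely the content of what one usually calls \emph{non-degeneracy} in the Newton polyhedron theory, and is the technical heart of the theorem; the subsequent intersection-theoretic calculation, by contrast, is essentially formal once one has the right setup.
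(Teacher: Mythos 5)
The paper states the BKK theorem as known background, citing Bernstein \cite{B}, and gives no proof of its own; so there is nothing in the paper to compare your argument against. Your outline is the standard toric-geometric proof --- compactify $(\C^*)^n$ by a smooth complete fan refining all normal fans of the $\Delta_i$, identify the (generic) solution count with $L_1\cdots L_n$, and then identify that intersection number with $n!\,Vol(\Delta_1,\dots,\Delta_n)$ by multilinearity/polarization plus Ehrhart and asymptotic Riemann--Roch --- and its structure is correct. Note that Bernstein's original proof \cite{B} is a more elementary degeneration/Puiseux-series argument that does not pass through toric compactifications; the toric route you take is the cleaner modern one and is entirely standard.

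As you acknowledge, the one step requiring real work is boundary avoidance. It is worth spelling out the dimension count: for a cone $\sigma$ of the fan with $\dim\sigma\geq 1$, the restriction of $s_i$ to the orbit $O_\sigma$ is governed by the face polynomial $f_i^\sigma$ supported on the face $\cA_i^\sigma$ of $\cA_i$ selected by $\sigma$, and each condition $f_i^\sigma(p)=0$ cuts out a codimension-$1$ linear subspace of $\Omega$ for fixed $p\in O_\sigma$. Hence the incidence variety
\[
\bigl\{(p,f)\in O_\sigma\times\Omega : f_1^\sigma(p)=\cdots=f_n^\sigma(p)=0\bigr\}
\]
has dimension $\dim\Omega - n + \dim O_\sigma = \dim\Omega - \dim\sigma < \dim\Omega$, so its projection to $\Omega$ is a proper closed subset; taking the complement over the finitely many cones $\sigma$ of positive dimension gives the desired open set where no solution lies on the boundary. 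The same incidence-variety count, applied to the locus where the $n$ differentials $df_i$ are dependent, gives transversality in the open torus. With these two facts supplied, the remainder of your argument (intersection number equals transversal count, multilinearity plus $L_\Delta^n=n!\,Vol(\Delta)$ via Ehrhart and Riemann--Roch) is correct as written.
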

For some of other examples see \cite{DKh}, \cite{Kh}, \cite{Kh2}. If $(f_1,\ldots,f_k)\in \Sigma$, the invariants of $Y$ depend not only on $\Delta_1,\ldots,\Delta_k$ and, in general, are much harder to compute.

In the case that $\cA_1, \ldots, \cA_k$ are such that the general system is inconsistent in $(\C^*)^n$ one can modify the question in the following way. {\emph What are discrete invariants of a zero set of generic consistent system with given supports? }The main result of this paper is Theorem~\ref{main} which reduces this question to the  Newton polyhedra theory. In this situation, the discrete invariants are computed in terms of supports themselves, not the Newton polyhedra. Some examples of applications of Theorem~\ref{main} are given in Section~5 (in particular we obtain a generalization of the BKK Theorem).
\subsection*{Acknowledgments}
The author would like to thank Askold Khovanskii for the proposing this problem and for his enthusiasm and support during the work as well as Bernd Schober and the referee for very useful comments on the earlier versions of the paper.

\section{Preliminary facts on the set of consistency.}

The material of this section is well-known (see for example \cite{GKZ}, \cite{St}, \cite{D'AS}). 
\subsection{Definition of the incidence variety and the set of consistency.}

Let $A=(\cA_1,\ldots,\cA_k)$ be a collection of $k$ finite subsets of the lattice  $\Z^n$. The space $\Omega_A$ of Laurent polynomials $f_1,\ldots f_k$ with supports in $\cA_1,\ldots,\cA_k$ is isomorphic to $(\C)^{|\cA_1|+\ldots+|\cA_k|}$, where $|\cA_i|$ is the number of points in $\cA_i$. 

\begin{Definition}
The {\it incidence variety} $\widetilde X_A\subset (\C^*)^n\times \Omega_A$ is defined as:
$$
\widetilde X_A=\{(p,(f_1,\ldots,f_k))\in(\C^*)^n\times \Omega_A | f_1(p)=\ldots=f_k(p)=0 \}.
$$
\end{Definition}

Let $\pi_1:(\C^*)^n \times \Omega_A \to (\C^*)^n$, $\pi_2:(\C^*)^n\times \Omega_A \to  \Omega_A$  be natural projections to the first and the second factors of the product. 

\begin{Definition}
The {\it set of consistency} $X_A\subset \Omega_A$ is the image of $\widetilde X_A$ under the projection $\pi_2$.
\end{Definition}

\begin{Theorem}
The  incidence variety $\widetilde X_A\subset (\C^*)^n\times \Omega_A$ is a smooth algebraic variety.
\end{Theorem}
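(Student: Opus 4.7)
The plan is to exhibit $\widetilde X_A$ as a rank $(|\cA_1|+\ldots+|\cA_k|-k)$ vector bundle over $(\C^*)^n$ via the projection $\pi_1$. Since $(\C^*)^n$ is smooth and the total space of a vector bundle over a smooth base is smooth, this will settle the claim.

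First I would write a general point of $\Omega_A$ as a tuple of Laurent polynomials $f_i = \sum_{a\in\cA_i} c_{i,a}\, x^a$, so that $\Omega_A$ is naturally identified with $\C^{|\cA_1|}\times\cdots\times\C^{|\cA_k|}$ with coordinates $(c_{i,a})$. Fix $p\in(\C^*)^n$ and look at the fiber $\pi_1^{-1}(p)$. The defining condition $f_i(p)=0$ becomes the linear equation
\[
\sum_{a\in\cA_i} c_{i,a}\, p^a \;=\; 0
\]
in the coefficients of $f_i$. Because $p\in(\C^*)^n$, every monomial value $p^a$ is nonzero, so this is a nontrivial linear equation and cuts out a hyperplane in $\C^{|\cA_i|}$. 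The $k$ conditions are independent since each involves the coefficients of a different polynomial, so $\pi_1^{-1}(p)$ is a linear subspace of codimension $k$ in $\Omega_A$.

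Next I would assemble these fibers into a vector bundle. Concretely, over $(\C^*)^n$ one has the trivial bundle with fiber $\C^{|\cA_i|}$ and a global linear functional $\varphi_i\colon (c_{i,a})_{a\in\cA_i}\mapsto \sum_{a\in\cA_i}c_{i,a}p^a$ whose coefficients $p^a$ are nowhere-vanishing regular functions on $(\C^*)^n$. Hence $\ker\varphi_i$ is a subbundle, and $\widetilde X_A=\ker\varphi_1\oplus\cdots\oplus\ker\varphi_k$ is a subbundle of the trivial bundle $(\C^*)^n\times\Omega_A$. Local trivializations can be produced explicitly: on the open set where a chosen $p^{a_0}$ is nonzero (which is all of $(\C^*)^n$ for any single $a_0\in\cA_i$), one solves for $c_{i,a_0}$ in terms of the other coefficients, giving a linear isomorphism of $\ker\varphi_i$ with $\C^{|\cA_i|-1}$ that varies regularly in $p$.

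An even shorter alternative, if preferred, is a direct application of the Jacobian criterion: the $k$ defining equations $F_i(p,c)=f_i(p)$ depend on disjoint sets of coefficient variables, and $\partial F_i/\partial c_{i,a}=p^a\neq 0$ on all of $\widetilde X_A$, so the Jacobian of the $F_i$ with respect to the $c_{i,a}$ alone is block-diagonal with each block a nonzero row, hence of rank $k$ everywhere. Thus $\widetilde X_A$ is a smooth complete intersection of codimension $k$ in the smooth variety $(\C^*)^n\times\Omega_A$. There is no real obstacle here beyond being careful that $p\in(\C^*)^n$ guarantees $p^a\neq 0$ for every lattice point $a$; this is the one and only place the torus condition is used, and it is what makes the argument work uniformly.
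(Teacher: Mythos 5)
Your proof is correct and takes essentially the same approach as the paper: both exhibit $\widetilde X_A$ as a rank $|\cA_1|+\ldots+|\cA_k|-k$ vector bundle over $(\C^*)^n$ via $\pi_1$, noting that each fiber is cut out by $k$ independent linear equations in the coefficients (independent because they involve disjoint sets of variables and the monomial values $p^a$ are nonzero on the torus). You just spell out the local trivializations and add an optional Jacobian-criterion variant, both of which are fine elaborations of the same idea.
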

\begin{proof}
Indeed, the projection $\pi_1$ restricted to $\widetilde X_A$:
$$
\pi_1:\widetilde X_A \to (\C^*)^n
$$
forms a vector bundle of rank  ${|\cA_1|+\ldots+|\cA_k|-k}$. That is because for a point $p\in (\C^*)^n$ the preimage $\pi_1^{-1}(p)\subset \widetilde X_A$ is given by $k$  independent linear equations on the coefficients of polynomials $f_1,\ldots,f_k$.
\end{proof}

We will say that a constructible subset $X$ of $\C^N$ is irreducible if for any two polynomials $f, g$ such that $fg|_X=0$ either $f|_X=0$ or $g|_X=0$. 

\begin{Corollary}
The set of consistency $X_A$ is an irreducible constructible subset of $\Omega_A$.
\end{Corollary}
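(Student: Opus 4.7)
The plan is to reduce the statement to the standard irreducibility of $\widetilde X_A$ as an algebraic variety, then transfer the conclusion to $X_A = \pi_2(\widetilde X_A)$ by combining Chevalley's theorem with a Zariski-closure argument.

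First I would verify that $\widetilde X_A$ is irreducible as an algebraic variety. The preceding theorem exhibits $\pi_1|_{\widetilde X_A}$ as a vector bundle over $(\C^*)^n$ of rank $|\cA_1|+\dots+|\cA_k|-k$. Since $(\C^*)^n$ is irreducible and each fiber is an affine space (hence irreducible), $\widetilde X_A$ is irreducible: locally it is isomorphic to a product of an open subset of $(\C^*)^n$ with an affine space, and these Zariski-open pieces pairwise intersect, so any two dense opens of $\widetilde X_A$ meet.

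Next I would apply Chevalley's theorem to the regular morphism $\pi_2:\widetilde X_A\to \Omega_A$ to conclude that $X_A=\pi_2(\widetilde X_A)$ is a constructible subset of $\Omega_A$. Being constructible, $X_A$ contains a nonempty Zariski-open subset of its Zariski closure $\overline{X_A}$, so $X_A$ is Zariski-dense in $\overline{X_A}$; in particular a polynomial on $\Omega_A$ that vanishes on $X_A$ automatically vanishes on all of $\overline{X_A}$. Moreover, since $\widetilde X_A$ is irreducible and $\pi_2$ is continuous in the Zariski topology, $\overline{X_A}=\overline{\pi_2(\widetilde X_A)}$ is an irreducible closed algebraic subset of $\Omega_A$ in the usual sense.

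Now suppose $f,g$ are polynomials on $\Omega_A$ with $fg|_{X_A}=0$. By the density argument above, $fg$ vanishes on $\overline{X_A}$, and then ordinary irreducibility of $\overline{X_A}$ forces $f|_{\overline{X_A}}=0$ or $g|_{\overline{X_A}}=0$, which restricts to $f|_{X_A}=0$ or $g|_{X_A}=0$, as required. The only real subtlety in this outline is handling the nonstandard notion of irreducibility adopted in the paper for constructible (rather than closed) sets; this is resolved by the Zariski-closure step, which is the bridge between the functional statement in the definition and the topological statement that follows from the vector-bundle description of $\widetilde X_A$.
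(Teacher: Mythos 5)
Your proposal is correct and follows essentially the same route as the paper: the paper's one-line proof also appeals to $\widetilde X_A$ being an irreducible variety and $X_A=\pi_2(\widetilde X_A)$ being its image under an algebraic map, from which constructibility (Chevalley) and irreducibility follow. You have simply unwound those two implicit steps, including the useful bridge via $\overline{X_A}$ to reconcile the paper's functional definition of irreducibility with the topological one.
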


\begin{proof}
 Since $X_A=\pi_2(\widetilde X_A)$ is the image of an irreducible algebraic variety $\widetilde X_A$ under the algebraic map $\pi_2$, it is constructible and irreducible.
\end{proof}

\subsection{Codimension of the set of consistency.}

 For a collection $B=(\cB_1,\ldots,\cB_\ell)$ of finite subsets of $\Z^n$ let $\cB=\cB_1+\ldots+\cB_\ell$ be the Minkowski sum of all subsets in the collection and let $L(B)$ be the linear subspace parallel to the minimal affine subspace containing $\cB$.

\begin{Definition}
The defect of a collection $B=(\cB_1,\ldots,\cB_\ell)$ of finite subsets of $\Z^n$ is given by
$$
\Def(\cB_1,\ldots,\cB_\ell)=\dim(L(B))-\ell.
$$
\end{Definition}

For a subset $J\subset \{1,\ldots, \ell\}$  let us define the collection $B_J = (\cB_i)_{i\in J}$. For the simplicity we denote the defect $\Def(B_J)$ by $\Def(J)$, and the linear space $L(B_J)$ by $L(J)$.

The following theorem provides a criterion for a system of Laurent polynomials with supports in $\cA_1,\ldots,\cA_k$ to be generically consistent.
\begin{Theorem}[Bernstein]
A system of generic equations $f_1=\ldots=f_k=0$ of Laurent polynomials with supports in $\cA_1,\ldots,\cA_k$ respectively has a common root if and only if for any $J\subset \{1,\ldots,k\}$  the defect $\Def(J)$ is nonnegative.
\end{Theorem}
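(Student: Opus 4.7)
For the \emph{forward direction}, suppose $\Def(J)<0$ for some $J\subseteq\{1,\ldots,k\}$. Multiplying each $f_i$ by an appropriate monomial does not change its zero set in $(\C^*)^n$, so I may assume that for every $i\in J$ the support $\cA_i$ lies inside $L(J)\cap\Z^n$ (this uses the fact that the direction space of the affine hull of $\cA_i$ is contained in $L(J)=\sum_{i\in J} L(\{i\})$). Each such $f_i$ is then the pullback, via the quotient $(\C^*)^n\to(\C^*)^n/T'$, of a Laurent polynomial on a torus of dimension $\dim L(J)<|J|$, where $T'$ is the subtorus with character lattice equal to the annihilator of $L(J)\cap\Z^n$ in $\Z^n$. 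A generic system of $|J|$ Laurent polynomials on a torus of dimension strictly less than $|J|$ has empty common zero locus, so the original system is generically inconsistent.

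For the \emph{reverse direction}, suppose $\Def(J)\geq 0$ for all $J\subseteq\{1,\ldots,k\}$. I split into two cases according to the value of $\Def(\{1,\ldots,k\})$. If $\Def(\{1,\ldots,k\})=0$, then after translation and a suitable monomial change of coordinates all $\cA_i$ lie in $\Z^k\subseteq\Z^n$, so the system becomes square in $k$ variables. By the BKK theorem its generic consistency is equivalent to strict positivity of the mixed volume of $\Delta_1,\ldots,\Delta_k$, which by the classical Minkowski--Khovanskii positivity criterion holds if and only if $\Def(J)\geq 0$ for every $J\subseteq\{1,\ldots,k\}$. If $\Def(\{1,\ldots,k\})>0$, I adjoin a generic binomial $f_{k+1}=x^\alpha-c\,x^\beta$ with the exponent difference $\alpha-\beta$ chosen inside $L(\{1,\ldots,k\})\cap\Z^n$ and lying outside each proper subspace $L(J)$ with $\Def(J)=0$; such a choice is possible because $\dim L(J)\leq k<\dim L(\{1,\ldots,k\})$ whenever $\Def(J)=0$ and $\Def(\{1,\ldots,k\})>0$. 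The enlarged collection satisfies $\Def(\{1,\ldots,k+1\})=\Def(\{1,\ldots,k\})-1$ and still has all defects nonnegative; iterating this step reduces to the first case, and any solution of the enlarged system is a solution of the original.

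The main obstacle is the Minkowski--Khovanskii mixed-volume positivity criterion in the square case, which I would cite as classical; its proof is an independent induction on $k$ using the recursive expansion of mixed volume along a facet of one polytope. A subsidiary technical point is verifying that the adjoined binomials preserve every defect inequality in the enlarged collection: for $J\cup\{k+1\}$ with $\alpha-\beta\notin L(J)$ one has $\Def(J\cup\{k+1\})=\Def(J)\geq 0$, while $\Def(\{1,\ldots,k+1\})=\Def(\{1,\ldots,k\})-1\geq 0$, so a single generic choice of $\alpha-\beta$ in $L(\{1,\ldots,k\})$ outside the finite union of proper $L(J)$ handles all subsets at once.
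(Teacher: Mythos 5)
The paper itself states this theorem without proof, merely attributing it to Bernstein, so there is no internal argument to compare yours against. Your blind proof, however, is correct, and I'll only flag a couple of points worth sharpening.

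The forward direction is clean: after a monomial shift putting $0 \in \cA_i$ for every $i\in J$, each $\cA_i$ with $i\in J$ sits in $L(J)\cap\Z^n$, and since $L(J)$ is a rational subspace its lattice points form a saturated sublattice, so the quotient $(\C^*)^n \to (\C^*)^n/T'$ is a genuine surjection of tori. The subsystem $(f_i)_{i\in J}$ descends to a torus of dimension $\dim L(J) < |J|$, where a dimension count on the incidence variety shows generic inconsistency; the preimage is then empty.

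For the reverse direction, your reduction by adjoining binomials is an efficient way to get around the fact that BKK is formulated for square systems. The defect bookkeeping you give is right: for a generic $\alpha-\beta \in L(\{1,\ldots,k\})\cap\Z^n$ avoiding the finitely many proper rational subspaces $L(J)$ with $\Def(J)=0$ (which exists because no finite union of proper rational subspaces exhausts a lattice of positive rank), one has $\Def(J\cup\{k+1\}) = \Def(J) \geq 0$ when $\alpha-\beta\notin L(J)$ and $\Def(J\cup\{k+1\})=\Def(J)-1\geq 0$ when $\alpha-\beta\in L(J)$ forces $\Def(J)\geq 1$; and adjoining a binomial whose support spans a line inside $L(\{1,\ldots,k\})$ drops the total defect by exactly one. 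Two minor remarks. First, you should note explicitly that the inductive hypothesis $|J| < \dim L(\{1,\ldots,k\})$ holds at every stage of the iteration, not only the first, because the full set keeps positive defect until the last step. Second, to conclude you implicitly need that generic consistency of the enlarged system over the enlarged parameter space implies generic consistency of the truncated system over the original parameter space; this is a standard fiber-dimension argument but deserves a sentence, since it is the one place the passage from ``generic in a bigger space'' to ``generic in a smaller space'' is used. The appeal to the Minkowski--Khovanskii positivity criterion for mixed volume is appropriate and is indeed independent of BKK, so there is no circularity.
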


According to the Bernstein theorem, if there exist subcollection of $A$ with negative defect, the codimension of the set of consistency is positive. We will call such collections $A$ {\it generically inconsistent}. The following theorem of Sturmfels determines the precise codimension of $X_A$. 
\begin{Theorem}[\cite{St}, Theorem 1.1]
Let $\cA_1,\ldots,\cA_k$ be such that the generic system with supports in $\cA_1,\ldots,\cA_k$ is inconsistent. Then the codimension of the set of consistency $X_A$ in  $\Omega_A$ is equal to the maximum of the numbers
$-\Def(J)$, where $J$ runs over all subsets of $\{1,\ldots,k\}$.
\end{Theorem}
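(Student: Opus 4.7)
The plan is to compute $\operatorname{codim}_{\Omega_A}(X_A)$ via a dimension count on the map $\pi_2\colon \widetilde{X}_A \to X_A$. By the previous theorem $\widetilde{X}_A$ is smooth irreducible of dimension $n + N - k$, where $N := \sum_i |\cA_i|$, and by the preceding corollary $X_A$ is irreducible. Thus if $d$ denotes the dimension of the generic fiber of $\pi_2|_{\widetilde{X}_A}$ then $\operatorname{codim}_{\Omega_A}(X_A) = k - n + d$, and with $r := \max_J(-\Def(J))$ the theorem reduces to the claim $d = n - k + r$.

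For the upper bound $d \le n - k + r$, I would inspect the tangent space to the generic fiber at a generic point $(p,(f_1,\ldots,f_k)) \in \widetilde{X}_A$. The fiber over $(f_1,\ldots,f_k)$ is $V(f_1,\ldots,f_k)$, so its tangent space at $p$ equals $\bigcap_i \ker(df_i|_p)$. Writing $f_i = \sum_{a \in \cA_i} c_{i,a} x^a$ and passing to logarithmic coordinates on $(\C^*)^n$, one checks that for $f_i$ generic subject to $f_i(p) = 0$ the differential $df_i|_p$ is a generic element of the complexified linear span $L(\cA_i) \otimes \C \subset \C^n$. The crux is then the following classical matroid-rank identity, which I would isolate as a separate lemma: for any subspaces $L_1,\ldots,L_k \subset \C^n$, the generic dimension of the span of vectors $v_i \in L_i$ equals
\[
k - \max_{J \subseteq \{1,\ldots,k\}} \Bigl(|J| - \dim \sum_{i \in J} L_i\Bigr),
\]
which in our setting is $k - r$. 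It is provable by induction on $k$ via a Hall-type deletion argument. Granting it, $\dim T_p V(f_1,\ldots,f_k) = n - (k-r) = n - k + r$, and since the generic fiber of a dominant morphism of irreducible varieties is equidimensional, we conclude $d \le n - k + r$.

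For the matching lower bound, pick $J^*$ realizing $r = |J^*| - \dim L(J^*)$ and, after affinely translating $\cA_i \mapsto \cA_i - a_i$ so that $0 \in \cA_i$ for every $i \in J^*$ (which does not change $V(f_i)$), consider the subtorus $T' \subset (\C^*)^n$ of dimension $n - \dim L(J^*)$ annihilating $L(J^*) \cap \Z^n$. Each $f_i$ with $i \in J^*$ is then $T'$-invariant, so $V(f_i : i \in J^*)$ is $T'$-invariant and nonempty (since $(f_1,\ldots,f_k) \in X_A$); because $T'$ acts freely on $(\C^*)^n$, every component has dimension at least $\dim T' = n - \dim L(J^*)$. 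Imposing the remaining $k - |J^*|$ hypersurfaces lowers the dimension by at most one each, so some component of $V(f_1,\ldots,f_k)$ has dimension at least $n - \dim L(J^*) - (k - |J^*|) = n - k + r$, hence $d \ge n - k + r$. Combining both bounds yields the theorem. The main obstacle is the matroid-rank identity used in the tangent-space step; it is classical but requires careful formulation when the linear spans $L(\cA_i)$ overlap or repeat.
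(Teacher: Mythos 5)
The paper does not prove this theorem; it is quoted verbatim from Sturmfels (\cite{St}, Theorem~1.1), so there is no in-paper argument to compare against, and I can only assess your proof on its own merits. Your plan — a fiber-dimension count for $\pi_2\colon\widetilde{X}_A\to X_A$, with the upper bound coming from the Zariski tangent space and the lower bound from a torus-invariance argument — is sound in outline and is in fact a reasonable way to prove the statement from scratch.

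Two steps need real care, though. First, the phrase ``the generic fiber of a dominant morphism of irreducible varieties is equidimensional'' is not quite the fact you need (and is not true as stated). What you should invoke is the theorem on the dimension of fibers: for a dominant morphism of irreducible varieties, $\dim_p\pi_2^{-1}(\pi_2(p,f))\ge\dim\widetilde{X}_A-\dim X_A=d$ for \emph{all} $(p,f)$, with equality on a dense open $U_1\subset\widetilde{X}_A$. Intersecting $U_1$ with the dense open $U_2$ on which $\dim\bigcap_i\ker(df_i|_p)=n-k+r$ (this is where you use both the matroid identity and the vector-bundle structure of $\pi_1\colon\widetilde{X}_A\to(\C^*)^n$, which makes the covectors $df_i|_p$ independently generic in $L(\cA_i)\otimes\C$) gives $d\le n-k+r$ cleanly. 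Second, the rank identity you isolate is precisely Rado's theorem on independent transversals in the linear matroid of $\C^n$; it is correct, but ``Hall-type deletion argument'' is too thin a justification — you should either prove it or cite it precisely. Finally, the lower bound as written conflates global and local dimension when ``imposing hypersurfaces''; fix $q\in Y_\bold{f}=V(f_1,\ldots,f_k)$ (nonempty since $\bold{f}\in X_A$), start from a component of $V(f_i:i\in J^*)$ through $q$ — which has dimension $\ge n-\dim L(J^*)$ because the connected subtorus $T'$ acts on it — and then apply Krull's principal ideal theorem one hypersurface at a time to components through $q$. This gives $\dim Y_\bold{f}\ge n-k+r$ for \emph{every} consistent $\bold{f}$, which is even a bit more than you need.
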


\begin{Definition}
For  a collection $\cA_1,\ldots,\cA_k$ of finite subsets of $\Z^n$ we will denote by $d( \cA)$ the smallest defect of a subcollection of $A$:
$$
d(A) = \min \{  \mathrm{def}(J) \mid J \subset \{1, \ldots, k \} \, \}.
$$
We will say that a collection $A$ is  generically inconsistent if the minimal defect $d(A)$ is negative. 
\end{Definition}
\begin{Definition}
For a generically inconsistent collection $A$ will call a subcollection $J$ {\it essential} if $\Def(J)=d(A)$ and $\Def(I)>d(A)$ for any $I\subset J$. In other words, $J$ is the minimal by inclusion subcollection with the smallest defect. 
\end{Definition}

  This definition is related to the definition of an essential subcollection given in \cite{St}, but is different in general. Sturmfels was  interested in resultants, so his definition was adapted to the case $d(A)=-1$ in which both definitions coincide.

The essential subcollection is unique. For $d=-1$ this was shown in \cite{St} (Corollary 1.1), in Lemma~\ref{unique} we prove this statement for arbitrary $d<0$. In the case $d(A)=0$ we will call the empty subcollection to be the unique essential subcollection.

\begin{Remark}
In the case $d(A)=0$ the subcollections $J$ such that $\Def(J)=0$ and $\Def(I)>0$ for any {\em nonempty} $I\subset J$ are also playing important role (see  \cite{Kh2}). 
\end{Remark}

\section{The defect and essential subcollections.}

\subsection{Uniqueness of essential subcollection.}
Let $\cA_1,\ldots,\cA_k$ be finite subsets of the lattice  $\Z^n$. As before, for any $J\subset \{1,\ldots k\}$, let $L(J)$ be the vector subspace parallel to the minimal affine subspace containing the Minkowski sum $\cA_J=\sum \cA_i$ with $i\in J$.

Most of the results of this section are based on the obvious observation that  the dimension of vector subspaces of $\R^n$ is subadditive with respect to sums. That is for two vector subspaces $V,W\subset \R^n$ the following holds:
$$
 \dim(V + W)= \dim(V)+ \dim(W)- \dim(V\cap W)\leq \dim(V)+ \dim(W).
$$
The immediate corollary of the relation above is the subadditivity of defect  with respect to disjoint unions. More precisely, for disjoint $I, J \subset \{1,\ldots k\}$ the following is true:
\begin{equation}\label{sub}
\Def(I\cup J)=\Def(I)+\Def(J)-\dim(L(I)\cap L(J)) \leq \Def(I)+\Def(J).
\end{equation}

\begin{Lemma}\label{6}
Let $K= I\cap J$, then $\Def(I\cup J)\leq def (I) +def (J) -def (K)$.
\end{Lemma}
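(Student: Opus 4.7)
The plan is to unwind the definition of defect, cancel the cardinality contributions, and then reduce everything to an inclusion--exclusion statement for vector subspaces. Writing out the desired inequality, since $|I\cup J|+|I\cap J|=|I|+|J|$, the cardinality terms match exactly, and the inequality is equivalent to
\[
\dim L(I\cup J)\ \le\ \dim L(I)+\dim L(J)-\dim L(K).
\]
So the content of the lemma is entirely linear-algebraic; the combinatorial defect formalism drops away.

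Next I would identify the subspaces explicitly. The affine hull of a Minkowski sum $\cB+\cB'$ is the Minkowski sum of the affine hulls, hence the associated linear subspaces add: $L(\cB+\cB')=L(\cB)+L(\cB')$. Applying this to the decompositions $\cA_I=\cA_{I\setminus K}+\cA_K$, $\cA_J=\cA_{J\setminus K}+\cA_K$, $\cA_{I\cup J}=\cA_{I\setminus K}+\cA_{J\setminus K}+\cA_K$, I obtain
\[
L(I)=L(I\setminus K)+L(K),\qquad L(J)=L(J\setminus K)+L(K),
\]
\[
L(I\cup J)=L(I\setminus K)+L(J\setminus K)+L(K)=L(I)+L(J).
\]

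Now I can invoke the standard identity $\dim(V+W)=\dim V+\dim W-\dim(V\cap W)$ for the pair $V=L(I)$, $W=L(J)$, which gives
\[
\dim L(I\cup J)=\dim L(I)+\dim L(J)-\dim\bigl(L(I)\cap L(J)\bigr).
\]
Because $K\subseteq I$ and $K\subseteq J$, we have $L(K)\subseteq L(I)\cap L(J)$, so $\dim L(K)\le \dim\bigl(L(I)\cap L(J)\bigr)$. Substituting yields the needed bound on $\dim L(I\cup J)$, and hence the desired defect inequality.

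The only step that requires any care is verifying $L(I\cup J)=L(I)+L(J)$; the point is that the summand $\cA_K$ is shared by $I$ and $J$ but is counted only once in $\cA_{I\cup J}$, yet this causes no loss because adding $L(K)$ to $L(I\setminus K)+L(J\setminus K)+L(K)$ again does nothing. Once this observation is recorded, the remainder is immediate from the dimension formula for a sum of two subspaces, exactly in the spirit of the subadditivity inequality (\ref{sub}) that the section opens with.
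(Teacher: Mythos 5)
Your proof is correct and follows essentially the same route as the paper's: both reduce the defect inequality to $\dim L(I\cup J)\leq \dim L(I)+\dim L(J)-\dim L(K)$, apply the dimension formula for the sum of two subspaces, and then use $L(K)\subseteq L(I)\cap L(J)$. The only difference is that you explicitly verify $L(I\cup J)=L(I)+L(J)$ via $L(\cB+\cB')=L(\cB)+L(\cB')$, a step the paper treats as immediate.
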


\begin{proof}
By the definition of the defect we have:
$$
\Def(I\cup J)= \dim(L(I\cup J)) -\#(I\cup J)=\dim(L(I\cup J)) -\#I-\#J+\#K,
$$
where $\#I,\#J,\#K$ are the sizes of $I,J,K$ respectively. But also 
$$
def (I) +def (J) -def (K) = \dim(L(I))+\dim(L(J))-\dim(L(K))-\#I-\#J+\#K,
$$
so we need to compare $\dim(L(I\cup J))$ and  $\dim(L(I))+\dim(L(J))-\dim(L(K))$. For this notice that 
$$
\dim(L(I\cup J))=\dim(L(I))+\dim(L(J))-\dim(L(I)\cap L(J)),
$$
and since $K\subset I\cap J$, the space $L(K)$ is a subspace of $L(I)\cap L(J)$, so 
$$
\dim(L(I\cup J))\leq \dim(L(I))+\dim(L(J))-\dim(L(K)),
$$
which finishes the proof.
\end{proof}

\begin{Corollary}
Let $J$ and $I$ be two not equal minimal by inclusion subcollections with minimal defect. Then $I\cap J=\varnothing$.
\end{Corollary}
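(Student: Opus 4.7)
The plan is to argue by contradiction and apply Lemma~\ref{6} directly. Set $K = I \cap J$ and suppose $K \ne \varnothing$; the goal is to derive $\Def(I \cup J) < d(A)$, contradicting the definition of $d(A)$ as the minimum defect.

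First I would verify that $K$ is a proper subcollection of each of $I$ and $J$. If $K = I$ then $I \subset J$, and since $I \ne J$ this inclusion is strict; but then $\Def(I) = d(A)$ with $I \subsetneq J$ contradicts the minimality of $J$ among subcollections of defect $d(A)$. The symmetric argument rules out $K = J$. Hence $K \subsetneq I$ and $K \subsetneq J$. Assuming $K \ne \varnothing$, the minimality of $I$ (applied to the proper subcollection $K$) forces $\Def(K) > d(A)$.

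Now I would plug into Lemma~\ref{6}:
\[
\Def(I \cup J) \leq \Def(I) + \Def(J) - \Def(K) = 2\,d(A) - \Def(K) < 2\,d(A) - d(A) = d(A),
\]
where the strict inequality uses $\Def(K) > d(A)$. This contradicts the fact that $d(A)$ is the smallest defect over all subcollections of $A$. Therefore $K = \varnothing$.

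There is no real obstacle here; the only subtlety is making sure that the inequality in Lemma~\ref{6} is applied with a \emph{strict} slack, which is exactly what the inclusion-minimality of $I$ (or $J$) supplies via $\Def(K) > d(A)$. The argument reduces to bookkeeping on which inclusions are strict, after which Lemma~\ref{6} does all the work.
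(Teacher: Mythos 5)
Your proof is correct and is essentially the same argument as the paper's: derive a contradiction from $K = I \cap J \ne \varnothing$ via Lemma~\ref{6}, using inclusion-minimality to get the strict inequality $\Def(K) > d(A)$. The only difference is that you spell out explicitly why $K$ is a proper subcollection of both $I$ and $J$, a step the paper leaves implicit.
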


\begin{proof}

Indeed, let $I\cap J=K\ne \varnothing$. Since $K\subset J$ and $K\ne J$, the defect of $K$ is larger than the defect of $J$, so $def (J) -def (K)<0$. But by Lemma \ref{6}
$$
\Def(I\cup J)\leq def (I) +def (J) -def (K)<\Def(I)= \Def(J),
$$
which contradicts $\Def(I)= \Def(J)=d(A)$.
\end{proof}

\begin{Lemma}\label{unique}
Let $A$ be a collection of finite subsets of  $\Z^n$ with $d(A)\leq0$, then the minimal by inclusion subcollection with minimal defect exists and is unique.
\end{Lemma}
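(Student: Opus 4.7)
The existence half is immediate: the set of subcollections of $A$ is finite, and by the very definition of $d(A)$ the set of subcollections $J$ with $\Def(J)=d(A)$ is nonempty, so it contains a minimal-by-inclusion element. The real content of the lemma is uniqueness, and my plan is to split into the cases $d(A)<0$ and $d(A)=0$.

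In the strictly negative case $d(A)<0$, the argument is essentially already in hand: by the Corollary preceding the lemma, any two distinct minimal-by-inclusion subcollections $I$ and $J$ with $\Def(I)=\Def(J)=d(A)$ satisfy $I\cap J=\varnothing$. Both $I$ and $J$ are nonempty since $\Def(\varnothing)=0>d(A)$. Then the subadditivity relation~(\ref{sub}) for disjoint unions yields
$$
\Def(I\cup J)\;\leq\;\Def(I)+\Def(J)\;=\;2d(A)\;<\;d(A),
$$
contradicting the definition of $d(A)$ as the minimum defect. Hence $I=J$.

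The case $d(A)=0$ has to be treated separately, because there the inequality $2d(A)<d(A)$ fails. Here I would observe directly that the empty subcollection has defect $0=d(A)$ (with the conventions $\dim L(\varnothing)=0$ and $|\varnothing|=0$), and so it is automatically minimal-by-inclusion in the poset of subcollections achieving the defect $d(A)$. Any other subcollection $J$ with $\Def(J)=0$ strictly contains the empty subcollection, hence fails the minimality condition. Thus $\varnothing$ is the unique minimal-by-inclusion subcollection with minimal defect, consistent with the convention stated just before the lemma.

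I do not foresee any real obstacle: the only subtlety is remembering to handle $d(A)=0$ by inspection rather than through the disjoint-union inequality, since in that boundary case the subadditivity estimate alone does not produce a contradiction.
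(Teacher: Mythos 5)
Your proof is correct and follows essentially the same route as the paper: it splits into the cases $d(A)=0$ (where $\varnothing$ is observed directly to be the unique minimal subcollection) and $d(A)<0$ (where the preceding Corollary gives disjointness of two putative minimal subcollections and the subadditivity relation~(\ref{sub}) then yields $\Def(I\cup J)\leq 2d(A)<d(A)$, a contradiction). The only differences are small elaborations---noting explicitly that $I,J$ are nonempty when $d(A)<0$, and spelling out why no nonempty $J$ can be minimal when $d(A)=0$---which the paper leaves implicit.
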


\begin{proof}
 In the case $d(A) = 0$ the unique essential subcollection is the empty collection $J=\varnothing$.
 
For $d(A)<0$, existence is clear. For uniqueness, assume that $I$ and $J$ are two different minimal by inclusion subcollections with minimal defect, then by Lemma 1 $I\cap J = \varnothing$. But for disjoint subcollections $I, J$ by relation (\ref{sub}) we have:
$$
\Def(I\cup J) \leq \Def(I)+\Def(J) < \Def(I)=\Def(J),
$$
since $\Def(I)=\Def(J)=d(A)<0$. But this contradicts the minimality of $I$ and $J$.
\end{proof}

\subsection{Some properties of the essential subcollection.}

Let $A=(\cA_1,\ldots,\cA_k)$ be a collection of finite subsets of the lattice  $\Z^n$. For the subcollection $J$ denote by $J^c=\{1,\ldots,k\} \setminus J$ the compliment subcollection and by $\pi_J:\R^n\to \R^n/L(J)$ the natural projection. 

\begin{Lemma}\label{compdef}
In the notations above let $\pi_J(J^c)$ be the collection $(\pi_J(\cA_i))_{i\in J^c}$. Then the following relations hold: \\
1. $\Def(A)= \Def(J) + \Def(\pi_J(J^c)),$\\
2. $d(A)\geq d(J) + d(\pi_J(J^c)),$\\
3. if $J$ furthermore is the unique essential subcollection of $A$, then $d(\pi_J(J^c))=0$. 
\end{Lemma}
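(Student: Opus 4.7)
The plan is to handle the three parts in order, using part (1) as the main computational input for both (2) and (3).

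For part (1), the core observation is that $L(A) = L(J) + L(J^c)$, since the affine hull of $\cA_1+\cdots+\cA_k$ decomposes as the Minkowski sum of the affine hulls of $\sum_{i\in J}\cA_i$ and $\sum_{i\in J^c}\cA_i$. Applying $\pi_J$, the linear space of the projected collection is $\pi_J(L(J^c)) = (L(J^c)+L(J))/L(J)$, which has dimension $\dim L(A) - \dim L(J)$. Substituting into the defect formula immediately yields $\Def(A) = \Def(J) + \Def(\pi_J(J^c))$.

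For part (2), I would fix an arbitrary $K \subset \{1,\dots,k\}$ and split it as $K = K_1 \sqcup K_2$ with $K_1 = K\cap J$ and $K_2 = K\cap J^c$. A direct computation using the dimension-of-sum formulas $\dim L(K) = \dim L(K_1) + \dim L(K_2) - \dim(L(K_1)\cap L(K_2))$ and $\dim L(\pi_J(K_2)) = \dim L(K_2) - \dim(L(K_2)\cap L(J))$ reduces the desired inequality to
\[
\Def(K) - \Def(K_1) - \Def(\pi_J(K_2)) \;=\; \dim(L(K_2)\cap L(J)) - \dim(L(K_1)\cap L(K_2)) \;\geq\; 0,
\]
where the last step uses the containment $L(K_1)\cap L(K_2) \subset L(J)\cap L(K_2)$, which holds because $K_1 \subset J$ forces $L(K_1) \subset L(J)$. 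Since $K_1 \subset J$ and $K_2 \subset J^c$, the right-hand side is at least $d(J) + d(\pi_J(J^c))$, and taking the minimum over $K$ gives $d(A) \geq d(J) + d(\pi_J(J^c))$.

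For part (3), I would apply part (1) to the subcollection $J \cup K_2$ of $A$ with inner subcollection $J$: for every $K_2 \subset J^c$ this gives $\Def(J \cup K_2) = \Def(J) + \Def(\pi_J(K_2))$. Since $\Def(J) = d(A) \leq \Def(J\cup K_2)$, this forces $\Def(\pi_J(K_2)) \geq 0$, so $d(\pi_J(J^c)) \geq 0$. Combined with the fact that the empty subcollection always has defect $0$ (hence $d(\pi_J(J^c)) \leq 0$), one concludes $d(\pi_J(J^c)) = 0$. The main obstacle is the bookkeeping in part (2): one must set up both dimension identities correctly and recognize that the entire inequality hinges on the single containment $L(K_1)\cap L(K_2) \subset L(J)\cap L(K_2)$; once this is isolated, parts (1) and (3) become essentially immediate consequences.
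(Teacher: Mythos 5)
Your proof is correct and follows essentially the same route as the paper: part (1) is the same dimension count, and part (3) is the paper's contradiction argument recast as a direct deduction. In part (2) you compute $\Def(K)-\Def(K_1)-\Def(\pi_J(K_2))\geq 0$ in one direct dimension calculation, whereas the paper reaches the same inequality in two steps (applying part (1) to the subcollection $B\cup C$ to get $\Def(B\cup C)=\Def(B)+\Def(\pi_B(C))$, then comparing $\Def(\pi_B(C))\geq\Def(\pi_J(C))$ using $L(B)\subset L(J)$); both hinge on the same containment $L(K_1)\cap L(K_2)\subset L(J)\cap L(K_2)$, so the difference is purely organizational.
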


\begin{proof}
The proof of the part 1. is a direct calculation:
\[
\Def(J\cup J^c)=\dim (L(J\cup J^c)) - \#(J\cup J^c) = 
\] 
\[
\dim(L(J))+ \dim L(\pi_J(J^c))- \#(J)-\#(J^c) = \Def(J) + \Def(\pi_J(J^c)).
\]

For the part 2. note, that for any $B\subset J$, $C\subset J^c$ one has $L(B)\subset L(J)$ and hence the following is true:
\[
\Def(\pi_B(C)) \geq \Def(\pi_J(C)).
\]
This implies that:
\[
\Def(B\cup C)=\Def(B) + \Def(\pi_B(C))\geq \Def(B) + \Def(\pi_J(C)) \geq d(J) + d(\pi_J(J^c)).
\]

For the part 3. assume that $\Def(\pi_J(I)) < 0$ for some $I\subset J^c$. Then by part 1. we have
\[
\Def(J\cup I) = \Def(J) + \Def(\pi_J(I))<\Def(J).
\]
Since the defect of empty collection is 0, the minimal defect $d(\pi_J(I))$ is also 0.
\end{proof}

\begin{Proposition}\label{d+1}
Let $A=(\cA_1,\ldots,\cA_{k})$ be a collection of finite subsets of $\Z^n$ such that $\Def(A)=d(A)<0$. Let $J$ be the unique essential subcollection of the collection $A$. Then for any $i\in J$, the following is true: 
\[
\Def(A\setminus \{i\})=d(A\setminus \{i\})= d(A)+1.
\] 
\end{Proposition}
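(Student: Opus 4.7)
My plan is to establish the two equalities in the order $\Def(A\setminus\{i\})=d(A)+1$, then $d(A\setminus\{i\})=d(A)+1$, by sandwiching $d(A\setminus\{i\})$ between $d(A)+1$ from below and $\Def(A\setminus\{i\})=d(A)+1$ from above.

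The first and most substantive step is to show that for any $i\in J$, removing $i$ from $J$ does not change the linear span: $L(J\setminus\{i\})=L(J)$. Since $J$ is essential and $J\setminus\{i\}\subsetneq J$, we have $\Def(J\setminus\{i\})\geq d(A)+1=\Def(J)+1$, and using $|J\setminus\{i\}|=|J|-1$ this rearranges to $\dim L(J\setminus\{i\})\geq\dim L(J)$; combined with the automatic inclusion $L(J\setminus\{i\})\subseteq L(J)$, equality follows. Because $L$ is additive over disjoint unions of subcollections, I can then write $L(A)=L(J)+L(J^c)$ and $L(A\setminus\{i\})=L(J\setminus\{i\})+L(J^c)$, so the previous equality gives $L(A\setminus\{i\})=L(A)$. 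Counting dimensions yields
\[
\Def(A\setminus\{i\})=\dim L(A)-(k-1)=\Def(A)+1=d(A)+1.
\]

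For the lower bound $d(A\setminus\{i\})\geq d(A)+1$, I would argue by contradiction. If some $I\subseteq A\setminus\{i\}$ had $\Def(I)\leq d(A)$, then since $I$ is also a subcollection of $A$ the bound $\Def(I)\geq d(A)$ forces $\Def(I)=d(A)$. Choosing such $I$ minimal by inclusion makes it an essential subcollection of $A$, so by Lemma~\ref{unique} we must have $I=J$, giving the contradiction $i\in J=I\subseteq A\setminus\{i\}$. Combining this with the obvious inequality $d(A\setminus\{i\})\leq\Def(A\setminus\{i\})=d(A)+1$ closes the argument. The main obstacle is really the key identity $L(J\setminus\{i\})=L(J)$, which is where the essentiality of $J$ is used decisively; everything else reduces to bookkeeping with defects and the uniqueness provided by Lemma~\ref{unique}.
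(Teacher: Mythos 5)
Your proof is correct, and it takes a genuinely different route from the paper's for both halves of the statement. The paper establishes $\Def(A\setminus\{i\})=\Def(A)+1$ by first showing $L(J\setminus\{i\})=L(J)$ (the same key step you use) and then invoking the decomposition formula $\Def(B)=\Def(J)+\Def(\pi_J(J^c))$ from Lemma~\ref{compdef}; you instead pass directly from $L(J\setminus\{i\})=L(J)$ to $L(A\setminus\{i\})=L(A)$ via the additivity of $L$ over disjoint unions, which is a slightly more elementary route that avoids introducing the projection $\pi_J$. For the lower bound $d(A\setminus\{i\})\geq d(A)+1$, the paper again invokes parts 2 and 3 of Lemma~\ref{compdef} (superadditivity of $d$ under the decomposition and vanishing of $d(\pi_J(J^c))$ for essential $J$), whereas you argue directly: any subcollection of $A\setminus\{i\}$ attaining defect $d(A)$ would, after passing to a minimal one, be an essential subcollection of $A$ distinct from $J$, contradicting Lemma~\ref{unique}. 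Your lower-bound argument is shorter and more transparent, at the cost of needing a mild check that a minimal-by-inclusion such subcollection is indeed essential in the paper's sense — but this is immediate since any proper subcollection has defect $\geq d(A)$ automatically and cannot equal $d(A)$ by minimality. The paper's approach using $\pi_J$ does have the advantage of setting up machinery (Lemma~\ref{compdef}) that is reused elsewhere, so the two proofs reflect different strategic choices rather than one being uniformly better.
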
 

\begin{proof}
For a collection $B$ and an element $b\in B$  the defect can not increase by more then 1 after removing $b$:
\[
\Def(B\setminus \{b\})\leq \Def(B)+1,
\] 
where the equality holds if and only if $L(B\setminus \{b\})=L(B)$. For the essential subcollection $J$ and any $i\in J$, the defect $\Def(J\setminus \{i\})$ is strictly greater then $\Def(J)$, so it is equal to $\Def(J)+1=\Def(A)+1$. Hence, $L(J)= L(J\setminus \{i\})$, and in particular $\pi_J=\pi_{J\setminus \{i\}}$.

Since $\Def(A)=d(A)=\Def(J)$, the defect  $\Def(\pi_{J}J^c)$ is equal to zero by part 1. of Lemma~\ref{compdef}. Moreover, one has:
\[
\Def(A\setminus \{i\})=\Def(J\setminus \{i\}) + \Def(\pi_{J \setminus \{i\}}J^c)= \Def(J\setminus \{i\}) + \Def(\pi_{J}J^c)= \Def(A)+1.
\] 

By part 2. and part 3. of Lemma~\ref{compdef} one has:
\[
d(A\setminus \{i\})\geq d(J\setminus \{i\}) + d(\pi_{J \setminus \{i\}}J^c)= d(J\setminus \{i\}) + d(\pi_{J}J^c)=d(J) + 1. 
\]
But since $\Def(A\setminus \{i\})=\Def(A)+1$, the minimal defect $d(A\setminus \{i\})$ is also equal to $\Def(A)+1$. 
\end{proof}

\begin{Corollary}\label{nconsist}
Let $A=(\cA_1,\ldots,\cA_{n+d})$ be a collection of finite subsets of $\Z^n$ such that $A$ is an essential collection of defect $-d$, i.e.
$$
-d = d(A) = \Def(A) < \Def(J),
$$
for any proper $J\subset \{1,\ldots, n+d\}$. Then there exists a subcollection $I$ of size $dim(L(A))=n$ with $d(I)=0$. 
\end{Corollary}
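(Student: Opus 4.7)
The plan is to iterate the one-step reduction provided by Proposition~\ref{d+1} exactly $d$ times. First observe that the identity $\dim(L(A)) = |A| + \Def(A) = (n+d) + (-d) = n$ is immediate from the definition of defect, so the target cardinality for $I$ is indeed $n$.

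I will construct a decreasing chain $A = A_0 \supset A_1 \supset \cdots \supset A_d$ satisfying the invariants $|A_k| = n + d - k$ and $\Def(A_k) = d(A_k) = -d + k$. The base case $k = 0$ is the assumption that $A$ itself is essential. Given $A_k$ with $k < d$, the invariant gives $d(A_k) = -d+k < 0$, so Lemma~\ref{unique} supplies a unique essential subcollection $J_k \subset A_k$; I pick any $i_k \in J_k$ and set $A_{k+1} := A_k \setminus \{i_k\}$. Proposition~\ref{d+1}, whose hypothesis $\Def(A_k) = d(A_k)$ is exactly the inductive assumption, then yields $\Def(A_{k+1}) = d(A_{k+1}) = d(A_k) + 1$, preserving both invariants.

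After $d$ iterations we reach $A_d$ with $|A_d| = n$ and $d(A_d) = 0$, so $I := A_d$ is the required subcollection. The main (and essentially only) subtlety is that the condition $\Def(A_k) = d(A_k)$ needed to apply Proposition~\ref{d+1} must be preserved along the chain — but this is delivered for free by the conclusion of the same proposition at each step. So the argument is essentially automatic once Proposition~\ref{d+1} is in hand; there is no genuine obstacle, and we never need to pin down which particular element of $J_k$ we remove.
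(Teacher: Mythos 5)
Your proof is correct and is exactly the paper's argument — the paper's proof reads "Apply Proposition~\ref{d+1} successively," and you have spelled out the induction, correctly noting that the conclusion $\Def(A_{k+1})=d(A_{k+1})$ of each application of Proposition~\ref{d+1} is precisely the hypothesis needed for the next, and that $J_k\neq\varnothing$ whenever $d(A_k)<0$.
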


\begin{proof}
Apply Proposition~\ref{d+1} successively.
\end{proof}

\section{The main theorem.}

In this section we will prove the main theorem. For a collection $A=(\cA_1,\ldots, \cA_k)$ of finite subsets of $\Z^n$ and subcollection $J$ let $\cA_J, L(J)$, and $\pi_J$ be as before. For the subgroup $G$ of $\Z^n$ we will denote by $\ker(G)$ the set of points $p\in (\C^*)^n$ such that $g(p)=1$ for any $g\in G$. Furthermore, denote by
\begin{itemize}
\item $\Lambda(J) = L(J)\cap\Z^n$ the lattice of integral points in $L(J)$;
\item $G(J)$ the group generated by all the differences of the form $(a-b)$ with $a,b\in \cA_i$ for any $i\in J$;
\item $ind(J)$ the index of $G(J)$ in $\Lambda(J)$;
\item $\ker(G)$ the set of points $p\in (\C^*)^n$ such that $g(p)=1$ for any  $p\in G(J)$.
\end{itemize}

\subsection{Independence properties of systems} In this subsection we will prove independence theorems for the roots of generically consistent systems.

\begin{Lemma}\label{codim2}
Let $\cA\subset \Z^n$ be a finite subset of size at least 2 and let $p,q\in(\C)^*$ be such that $p/q \notin \ker(G(\cA))$. Then the set of Laurent polynomials $f$ with support in $\cA$, such that $f(p)=f(q)=0$ has codimension 2 in $\Omega_\cA$.
\end{Lemma}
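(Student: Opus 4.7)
The plan is to interpret the two conditions $f(p)=0$ and $f(q)=0$ as linear functionals on the coefficient space $\Omega_\cA \cong \C^{|\cA|}$ and to show that the hypothesis $p/q \notin \ker(G(\cA))$ is exactly what makes these two functionals linearly independent, so their common zero set has codimension $2$.

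First, I would fix coordinates on $\Omega_\cA$ by writing a general Laurent polynomial with support in $\cA$ as $f = \sum_{a\in \cA} c_a x^a$, so that evaluation at $p$ gives the linear form $\ell_p(f) = \sum_a c_a p^a$ and evaluation at $q$ gives $\ell_q(f) = \sum_a c_a q^a$. Each of $\ell_p$ and $\ell_q$ is a nonzero linear functional on $\Omega_\cA$, so each defines a hyperplane; the locus where both vanish has codimension $2$ if and only if $\ell_p$ and $\ell_q$ are linearly independent as elements of $\Omega_\cA^*$. Equivalently, the vectors $v_p = (p^a)_{a\in \cA}$ and $v_q = (q^a)_{a\in \cA}$ in $\C^{|\cA|}$ must not be proportional.

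The second step is to translate non-proportionality into the group-theoretic hypothesis. The vectors $v_p$ and $v_q$ are proportional precisely when the ratio $p^a/q^a = (p/q)^a$ is the same complex number for every $a \in \cA$. Taking the quotient of this ratio at two exponents $a,b \in \cA$, proportionality is therefore equivalent to $(p/q)^{a-b} = 1$ for all $a,b \in \cA$. Since $G(\cA)$ is by definition generated by such differences, this is the same as saying $g(p/q) = 1$ for every $g \in G(\cA)$, i.e.\ $p/q \in \ker(G(\cA))$. The hypothesis rules this out, so $v_p$ and $v_q$ are linearly independent, $\ell_p$ and $\ell_q$ cut out independent hyperplanes, and the common zero locus has codimension exactly $2$ in $\Omega_\cA$ (which is possible because $\dim \Omega_\cA = |\cA| \geq 2$).

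There is no real obstacle here beyond being careful with the dictionary between characters of the torus and points of $G(\cA) \subset \Z^n$; the only subtlety is that the implication ``proportional at every $a \in \cA$'' $\Longleftrightarrow$ ``$p/q$ kills every generator of $G(\cA)$'' uses only that $G(\cA)$ is generated by the pairwise differences from $\cA$, which is built into the definition given in the excerpt.
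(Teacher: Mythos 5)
Your argument is the same as the paper's: you interpret vanishing at $p$ and at $q$ as two linear functionals on the coefficient space and show they are linearly dependent precisely when $(p/q)^{a-b}=1$ for all $a,b\in\cA$, which is exactly the condition $p/q\in\ker(G(\cA))$ excluded by hypothesis. The proof is correct and matches the paper's reasoning.
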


\begin{proof}
Vanishing of $f$ at points $p$ and $q$ gives two linear conditions on the coefficients of $f$:
$$
\sum_{k\in A} a_k p^k =0,\quad \sum_{k\in A} a_k q^k =0.
$$
The relations above are independent unless $(p/q)^k=\lambda$ for some $\lambda$, and any $k\in \cA$. The later implies that $(p/q)^{k_1-k_2} = 1$ for any $k_1,k_2\in \cA$, i.e. $p/q \in \ker(G(\cA))$.
\end{proof}

\begin{Definition}
Let $T$ be an algebraic subgroup of $(\C^*)^n$ and $A=(\cA_1,\ldots \cA_n)$ be a collection  of finite subsets of $\Z^n$ such that $d(A)=0$. We would say that $A$ is $T$-independent if the generic system of Laurent polynomials $f_1,\ldots,f_n$ with supports in $A$ does not have two different roots $p,q\in (\C^*)^n$ with $p/q\in T$.
\end{Definition}

\begin{Corollary}\label{finite}
Let $A=(\cA_1,\ldots \cA_n)$ be a collection of finite subsets of $\Z^n$ and $G\subset (\C^*)^n$ be a finite subgroup such that $G\cap \ker(\cA_1,\ldots \cA_n)=1$, then the collection  is $G$-independent.
\end{Corollary}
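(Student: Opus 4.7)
The plan is to show, for each non-identity $g \in G$, that the set of $f \in \Omega_A$ admitting two roots $p \neq q$ in $(\C^*)^n$ with $q = gp$ lies in a proper Zariski closed subset. Since $G$ is finite, the finite union of these bad loci over $g \in G \setminus \{1\}$ is still proper, and its complement is Zariski open and dense in $\Omega_A$. Because $d(A) = 0$ (implicit in the definition of $G$-independence), the set of consistency $X_A$ is Zariski dense in $\Omega_A$, so a generic consistent system avoids the bad locus, which is exactly $G$-independence.

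Fix $g \neq 1$ and consider the incidence
\[
Y_g = \{(p, f) \in (\C^*)^n \times \Omega_A : f_i(p) = f_i(gp) = 0 \text{ for all } i = 1, \ldots, n\}.
\]
I would estimate $\dim Y_g$ via the projection $\pi_1 : Y_g \to (\C^*)^n$. The hypothesis $G \cap \ker(\cA_1,\ldots,\cA_n) = 1$, interpreted as $G \cap \bigcap_i \ker(G(\cA_i)) = 1$, guarantees some index $i_0$ with $g \notin \ker(G(\cA_{i_0}))$; equivalently, for any $p \in (\C^*)^n$ the ratio $p/(gp) = g^{-1}$ also lies outside $\ker(G(\cA_{i_0}))$. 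Lemma~\ref{codim2} then implies that the two linear conditions $f_{i_0}(p) = f_{i_0}(gp) = 0$ cut out codimension $2$ in the space of Laurent polynomials with support $\cA_{i_0}$, while for the remaining indices the analogous pair of conditions cut codimension at least $1$. Summing, the fiber of $\pi_1|_{Y_g}$ over any $p$ has codimension at least $n + 1$ in $\Omega_A$, so $\dim Y_g \leq n + \dim \Omega_A - (n+1) = \dim \Omega_A - 1$. Hence $\pi_2(Y_g) \subset \Omega_A$ is a constructible set of dimension at most $\dim \Omega_A - 1$, and its closure is a proper subvariety.

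The main point to watch is the hypothesis $|\cA_i| \geq 2$ needed to apply Lemma~\ref{codim2}; this is automatic, because if $|\cA_i| = 1$ then $L(\{i\}) = 0$ and $\Def(\{i\}) = -1$, contradicting the standing assumption $d(A) = 0$ from the definition of $G$-independence. Beyond this sanity check, the argument is a pure dimension count coupled with the finiteness of $G$, so no serious obstacle is expected.
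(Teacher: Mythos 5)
Your proof is correct and follows essentially the same route as the paper: fix $g\in G\setminus\{1\}$, use the hypothesis $G\cap\ker(G(A))=1$ to produce an index $i_0$ with $g\notin\ker(G(\cA_{i_0}))$, apply Lemma~\ref{codim2} for codimension $2$ at $i_0$ and codimension $\geq 1$ at the remaining factors, and conclude via the incidence variety over $(\C^*)^n$ that the bad locus has codimension $\geq 1$, then take the finite union over $G$. The paper compresses this into two sentences; your version is a careful expansion of that dimension count, with the useful added check that $|\cA_i|\geq 2$ follows from $d(A)=0$ (indeed, it also follows directly from $g\notin\ker(G(\cA_{i_0}))$, since otherwise $\ker(G(\cA_{i_0}))=(\C^*)^n$).
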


\begin{proof}
Indeed, since $G\cap \ker(G(A))=1$, for each $g\in G$ there exist $i$ such that $g\not\in \ker(\cA_i)$. So the space of systems which vanish at a pair of different points $p$ and $q$ with $p/q\in G$ is a finite union of codimension at least 1 subspaces, which finishes the proof. 
\end{proof}

For an algebraic subgroup $T$ of $(\C^*)^n$ let $Lie(T)$ be its Lie algebra and $L_T\subset \R^n$ be its annihilator in the space of characters. In other words, $L_T$ is a linear span of the set of monomials which have value 1 on the identity component of a group $T$.

\begin{Theorem}\label{Tindepen}
Let $A=(\cA_1,\ldots \cA_n)$ be a collection of finite subsets of $\Z^n$ such that $d(A)=0$. Let $T$ be an algebraic subgroup of $(\C^*)^n$ such that $\ker G(A) \cap T = 1$ and for any subcollection $J$ such that $L_T\subset L_J$ the defect of $J$ is positive. Then the collection $A$ is $T$-independent.
\end{Theorem}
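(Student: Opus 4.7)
The plan is to reduce $T$-independence to a dimension count on the incidence variety
\begin{equation*}
W := \{(p, t, f) \in (\C^*)^n \times (T \setminus \{1\}) \times \Omega_A : f_i(p) = f_i(pt) = 0 \text{ for all } i\}.
\end{equation*}
The projection $W \to \Omega_A$ hits exactly those systems that fail $T$-independence, so since $\Omega_A$ is irreducible it is enough to show $\dim W < \dim \Omega_A$. I would stratify $W$ by $J(t) := \{i : t \in \ker G(\cA_i)\}$, i.e.\ the set of indices where the equation $f_i(pt) = 0$ is linearly dependent on $f_i(p) = 0$.

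The first step is to analyze each stratum's fiber. For $i \in J(t)$, the equality $t^a = t^b$ for all $a, b \in \cA_i$ yields $f_i(pt) = t^{a_0} f_i(p)$ for any $a_0 \in \cA_i$, so the condition $f_i(pt) = 0$ collapses to $f_i(p) = 0$; for $i \notin J(t)$, Lemma~\ref{codim2} applied to $q = pt$ (so that $p/q = t^{-1} \notin \ker G(\cA_i)$) shows that $f_i(p) = f_i(pt) = 0$ cuts out codimension two in $\Omega_{\cA_i}$ for every $p \in (\C^*)^n$. Consequently the stratum $W_J := \{(p, t, f) \in W : J(t) = J\}$ has fibers over $(p, t)$ of codimension $|J| + 2|J^c| = 2n - |J|$ in $\Omega_A$. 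Its base $T_J := \{t \in T : J(t) = J\}$ is contained in the algebraic subgroup $T(J) := T \cap \bigcap_{i \in J} \ker G(\cA_i)$, whose Lie algebra is annihilated in the character lattice by $L_T + L(J)$, giving $\dim T_J \le n - \dim(L_T + L(J))$. Summing,
\begin{equation*}
\dim W_J \;\le\; n + \dim T_J + \dim \Omega_A - (2n - |J|) \;\le\; \dim \Omega_A + |J| - \dim(L_T + L(J)).
\end{equation*}
The hypothesis $\ker G(A) \cap T = \{1\}$ forces $T_{\{1, \dots, n\}} \subseteq \{1\}$, so only proper $J \subsetneq \{1, \dots, n\}$ contribute to $W$.

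Finally, I would verify the strict inequality $|J| < \dim(L_T + L(J))$ for every such $J$ by splitting into two cases. If $L_T \subseteq L(J)$, the inequality reduces to $|J| < \dim L(J)$, i.e., $\Def(J) > 0$, which is the standing hypothesis. If $L_T \not\subseteq L(J)$, then $\dim(L_T + L(J)) \ge \dim L(J) + 1 \ge |J| + 1$, the last step using $d(A) = 0$ (so $\Def(J) \ge 0$). Either way $\dim W_J < \dim \Omega_A$, and since $W$ is a finite union of such strata, $\dim W < \dim \Omega_A$, proving the theorem. The main technical point, and the likely obstacle, is the uniformity in $p$ of the codimension-two count from Lemma~\ref{codim2}: without it one could only bound the fiber dimension over generic $p$, and the argument would require carrying along an additional open condition on $p$ while integrating over the positive-dimensional base $T_J$. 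Fortunately the proof of Lemma~\ref{codim2} shows that the two linear conditions are independent for every $p \in (\C^*)^n$ whenever $p/q \notin \ker G(\cA_i)$, so the stratified dimension estimate closes cleanly.
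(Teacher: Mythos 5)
Your proof is correct, and it reorganizes the paper's argument into a cleaner, one-shot stratification. The paper also uses Lemma~\ref{codim2} plus a dimension count over $(\C^*)^n \times T \times \Omega_A$, but it descends iteratively: it finds at least $k+1$ indices $i$ with $t \notin \ker G(\cA_i)$ for $t$ outside a lower-dimensional subset $T_1 = \bigcup_i (T \cap \ker G(\cA_i))$, handles $T \setminus T_1$, then ``applies the same argument'' to $T_1$, terminating at a finite subgroup where Corollary~\ref{finite} takes over. This recursion is slightly delicate since $T_1$ is a finite union of subgroups rather than a subgroup, so the theorem doesn't literally apply to it; one must run the argument on each piece $T\cap\ker G(\cA_i)$ separately. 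Your stratification of $T$ by the combinatorial invariant $J(t) = \{i : t \in \ker G(\cA_i)\}$ sidesteps that issue entirely and yields the explicit bound $\dim W_J \le \dim\Omega_A + |J| - \dim(L_T + L(J))$, after which the hypothesis ``$L_T \subset L(J) \Rightarrow \Def(J) > 0$'' (together with $d(A)=0$) closes each stratum in a single dichotomy. Two small remarks: you correctly note that the full stratum $J = \{1,\ldots,n\}$ is killed by $\ker G(A) \cap T = 1$ and $t \ne 1$, which matches the fact that the paper's hypothesis should be read as a condition on proper subcollections $J$ (for $J=\{1,\ldots,n\}$ one always has $L_T\subset L_J=\R^n$ yet $\Def(J)=0$); and your observation about the uniformity in $p$ of Lemma~\ref{codim2} is exactly right --- that lemma is pointwise in $p$, which is what makes the fiber-dimension count over the whole base $(\C^*)^n \times T_J$ work without further genericity conditions on $p$.
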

\begin{proof}
Let $k$ be the dimension of $T$, then $\dim L_T=n-k$. Since for any $J$ with $L_T\subset L_J$ the defect of $J$ is positive and $d(A)=0$, there are at most $n-k-1$ supports $\cA_i$ such that $L_i\subset L_T$.  Indeed, assume there is a subcollection $J$ of size $n-k$ with $L_J\subset L_T$, then $\Def(J)=0$ and $L_J=L_T$, which contradicts the assumptions. Therefore, there are at least $k+1$ supports $\cA_i$, say for  $i=1,\ldots, k+1$, with $\dim (T\cap \ker\cA_i) < k$. 

Define $T_1$ to be the union $\bigcup_{i=1}^{k+1} (T\cap \ker\cA_i)$ and $T' = T\setminus T_1$ to be its compliment. By Lemma~\ref{codim2}, the codimension of the set of systems $\bold f$ with supports in $A$ having roots $x$ and $px$ for the fixed $x\in (\C^*)^n$ and $p \in T'$ is at least $n+k+1$. Hence, the space of systems with supports in $A$  having 
 two different roots $p,q$ with $p/q\in T'$ has codimension at least 1.
 
 If the dimension of $T_1$ is positive, notice that $L_T \subset L_{T_1}$, and, therefore, for any $J$ such that $L_{T_1} \subset L_J$ the defect of $J$ is positive.  Hence, we can apply the above argument to $T_1$, and continue inductively until we obtain $T_l$ of dimension 0 (with  $T'_{l-1}=T_{l-1}\setminus T_l$).
 
Since $\dim T_l = 0$, i.e. $T_l$ is a finite subgroup of $(\C^*)^n$, by Corollary~\ref{finite} the space of systems with  two different roots $p,q$ with $p/q\in T_1$ has codimension at least 1.

In this manner we obtained the decomposition of $T$ in the finite disjoint union of subsets $\amalg_{i=0}^l T_i'$ (where $T_0'=T'$ and $T_l'=T_l$) such that for any $i$ the space of systems with  a pair of different roots $p,q$ with $p/q\in T_i$ has codimension at least 1. Therefore, the space of system with a pair of different roots $p$ and $q$ with $p/q\in T$ is a finite union of codimension at least 1 subspaces, and the theorem is proved.
\end{proof}

\begin{Corollary}
Let $\chi:(\C^*)^n\to \C^*$ be any character and $A=(\cA_1,\ldots \cA_n)$ be a collection of finite subsets of $\Z^n$ such that $G(A)=\Z^n$ and $\Def(J)>0$  for any proper nonempty subcollection $J$. Then the generic system of Laurent polynomials with supports in $A$ does not have a pair of different roots $p,q\in (\C^*)^n$ with $\chi(p)=\chi(q)$.
\end{Corollary}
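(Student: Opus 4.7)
The plan is to specialize Theorem~\ref{Tindepen} to $T=\ker(\chi)\subset(\C^*)^n$. Under this identification, the condition $\chi(p)=\chi(q)$ is equivalent to $p/q\in T$, so the corollary's conclusion is precisely the $T$-independence of $A$ in the sense of the definition preceding Corollary~\ref{finite}.

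Two of the hypotheses of Theorem~\ref{Tindepen} follow immediately from $G(A)=\Z^n$. This assumption forces the $\R$-span of the differences $a-b$ ($a,b\in\cA_i$) to be all of $\R^n$, hence $L(A)=\R^n$ and $\Def(A)=0$; combined with the assumed $\Def(J)>0$ for every proper nonempty $J$ together with $\Def(\emptyset)=0$, this yields $d(A)=0$. Similarly, $G(A)=\Z^n$ makes $\ker G(A)=\{1\}$, so the intersection $\ker G(A)\cap T$ is automatically trivial, regardless of $T$.

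The remaining hypothesis, that $\Def(J)>0$ for every proper nonempty subcollection $J$ with $L_T\subset L_J$, is the step I would treat most carefully. The statement is vacuous when $\chi$ is trivial, so I may assume $\chi\ne 1$; then $L_T$ is the one-dimensional subspace of $\R^n$ spanned by the exponent vector of $\chi$, and in particular $L_T\ne\{0\}$, so any $J$ with $L_T\subset L_J$ is necessarily nonempty. For such proper nonempty $J$ the inequality $\Def(J)>0$ is exactly what the corollary assumes, so Theorem~\ref{Tindepen} applies and gives $T$-independence of $A$, which by the first paragraph is the content of the corollary. The only mildly delicate point is reconciling the corollary's uniform non-degeneracy hypothesis on proper nonempty subcollections with the condition phrased in terms of $L_T$ in Theorem~\ref{Tindepen}; all the substantive work is already carried by that theorem.
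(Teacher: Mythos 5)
Your proof is correct and takes the same route as the paper: the paper's own proof is a single sentence that specializes Theorem~\ref{Tindepen} to $T=\ker(\chi)$ after observing that $\chi(p)=\chi(q)$ iff $p/q\in\ker(\chi)$, and your argument is a careful expansion of that sentence. Your verification that $d(A)=0$ and $\ker G(A)\cap T=1$ from $G(A)=\Z^n$ is what the paper leaves implicit.

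Two small points worth noting. First, your handling of the subcollection hypothesis is more careful than the paper's: as literally written, Theorem~\ref{Tindepen} asks for $\Def(J)>0$ for \emph{every} $J$ with $L_T\subset L_J$, which would include $J=A$ (where $\Def(A)=0$) and so could never hold; you correctly read ``proper subcollection'' into the theorem, which is what its proof actually uses. Second, your claim that the corollary is ``vacuous'' for trivial $\chi$ isn't accurate: for trivial $\chi$ we have $\chi(p)=\chi(q)$ for all $p,q$, so the conclusion would say the generic system has at most one root, which is generally false (BKK gives $n!\,Vol(\Delta_1,\ldots,\Delta_n)$ roots). The honest reading is that the corollary implicitly assumes $\chi$ nontrivial; the paper's remark that the hypotheses of Theorem~\ref{Tindepen} are satisfied ``for any algebraic subgroup'' has the same gap, since for $T=(\C^*)^n$ one has $L_T=\{0\}\subset L_\emptyset$ with $\Def(\emptyset)=0$. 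This does not affect the substance of either argument.
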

\begin{proof}
Indeed, $\chi(p)=\chi(q)$ if and only if $p/q\in \ker(\chi)$, but the collection $A$ is $\ker(\chi)$-independent since it satisfies  the assumptions of Theorem~\ref{Tindepen} for any algebraic subgroup of~$(\C^*)^n$. 
\end{proof}

\subsection{Zero set of the generic essential system.}
In this subsection we will work with the systems of Laurent polynomials $f_1=\ldots=f_k=0$ with supports in $A=(\cA_1,\ldots,\cA_k)$ such that the essential subcollection is $A$ itself. We call such systems essential.

\begin{Theorem}\label{singpo}
Let $A=(\cA_1,\ldots,\cA_{n+d})$ be a collection of finite subsets of $\Z^n$ such that $ind(A)=1$. Let also $A$ be an essential collection, i.e.
$$
-d=d(A)=\Def(A)<\Def(J),
$$
for any proper $J\subset \{1,\ldots, n+d\}$. Then for a generic consistent system  $\bold f =(f_1,\ldots,f_k)\in X_A \subset \Omega_A$, the corresponding zero set $Y_{\bold f}$ is a single point.
\end{Theorem}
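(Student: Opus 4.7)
The plan is to prove that the projection $\pi_2 \colon \widetilde X_A \to X_A$ is birational; then the generic fibre, which is the zero set $Y_{\mathbf f}$, must consist of a single point. The first step is a dimension count. The projection $\pi_1 \colon \widetilde X_A \to (\C^*)^n$ is a vector bundle of rank $|\cA_1|+\cdots+|\cA_{n+d}|-(n+d)$, so $\dim \widetilde X_A = |\cA_1|+\cdots+|\cA_{n+d}|-d$; by Sturmfels' theorem $\mathrm{codim}_{\Omega_A}X_A = -d(A) = d$, so $\dim X_A = \dim \widetilde X_A$ and $\pi_2$ is dominant and generically finite between irreducible varieties. It remains to show that the locus of double zeros
\[
W := \{(p,q,\mathbf f)\in (\C^*)^n \times (\C^*)^n \times \Omega_A : p\neq q, \ f_i(p)=f_i(q)=0 \ \forall i \}
\]
has image of strictly positive codimension in $X_A$.

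To bound $\dim W$, I stratify by the subset $K = \{i : p/q \in \ker G(\cA_i)\} \subset \{1,\ldots,n+d\}$. For each $K$, the ratio $t = p/q$ is constrained to the algebraic subgroup $\ker G(K) \subset (\C^*)^n$ of dimension $n - \dim L(K)$; the free choice of $p$ then bounds the base of pairs $(p,q)$ by $2n - \dim L(K)$. By Lemma~\ref{codim2}, for every $i \notin K$ the two conditions $f_i(p)=f_i(q)=0$ cut out codimension $2$ in $\Omega_{\cA_i}$, while for $i \in K$ they coincide and cut out codimension $1$. A short calculation then gives
\[
\dim W_K \leq (2n - \dim L(K)) + \bigl(|\cA_1|+\cdots+|\cA_{n+d}| - 2(n+d) + |K|\bigr) = |\cA_1|+\cdots+|\cA_{n+d}| - 2d - \Def(K),
\]
so that $\dim X_A - \dim W_K \geq d + \Def(K)$.

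Since $A$ is essential, $\Def(K) > -d$ for every proper $K \subsetneq \{1,\ldots,n+d\}$, so each such stratum has strictly positive codimension in $X_A$. For the remaining stratum $K = \{1,\ldots,n+d\}$, the hypothesis $\mathrm{ind}(A) = 1$ combined with $\dim L(A)=n$ forces $G(A) = \Z^n$, hence $\ker G(A) = \{e\}$; the requirement $p \neq q$ then empties the stratum. Therefore the projection of $W$ to $\Omega_A$ has positive codimension in $X_A$, so the generic $\mathbf f \in X_A$ admits at most one zero in $(\C^*)^n$, and consistency supplies at least one.

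The principal subtlety is organizing the stratified dimension count so that the codimension $d + \Def(K)$ is genuinely positive for every proper $K$ (which is exactly the essentiality of $A$) while $K = A$ is eliminated precisely by $\mathrm{ind}(A) = 1$. A minor preliminary reduction handles singleton supports: any $\cA_i$ with $|\cA_i|=1$ forces $f_i \equiv 0$ on $X_A$ and may be removed to produce a smaller essential collection of defect $-(d - s)$ with the same index property and all $|\cA_i| \geq 2$, to which Lemma~\ref{codim2} applies directly.
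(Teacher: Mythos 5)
Your proof is correct, and it takes a genuinely different route from the paper's.

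The paper proceeds by first invoking Corollary~\ref{nconsist} (via Proposition~\ref{d+1}) to extract a size-$n$ subcollection $I$ with $d(I)=0$, applying BKK to the subsystem $f_1=\dots=f_n=0$ to get finitely many roots, and then appealing to the $T$-independence theorem (Theorem~\ref{Tindepen}, with $T=\ker(I^c)$) to argue that, generically, only one of those roots extends to a root of the full system. Your argument bypasses all of that machinery: you work directly on the incidence variety of \emph{pairs} of zeros and carry out a single stratified dimension count, indexed by the set $K$ of supports for which the two conditions $f_i(p)=f_i(q)=0$ degenerate to one. The codimension estimate $d+\Def(K)$ is positive for every proper $K$ by essentiality, and $K=A$ is emptied by $\mathrm{ind}(A)=1$ forcing $G(A)=\Z^n$ and hence $\ker G(A)=\{e\}$. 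The only ingredient you borrow from the paper is Lemma~\ref{codim2}, which you use as a pointwise codimension statement. In effect you have inlined, into the proof of Theorem~\ref{singpo} itself, a stratification argument that is similar in spirit to the one the paper uses inside the proof of Theorem~\ref{Tindepen}, and thereby avoided Proposition~\ref{d+1}, Corollary~\ref{nconsist}, BKK, and Theorem~\ref{Tindepen} altogether. That is a real simplification; the cost is that you don't get Theorem~\ref{Tindepen} as a reusable intermediate, which the paper presumably wanted to isolate for its own sake.

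One small remark: your preliminary reduction for singleton supports is harmless but unnecessary. If $|\cA_i|=1$ then $\ker G(\cA_i)=(\C^*)^n$, so $i$ automatically lies in $K$, and for $i\in K$ you never invoke Lemma~\ref{codim2} — the two conditions coincide and cut out codimension exactly one in $\Omega_{\cA_i}$ regardless of $|\cA_i|$. So the stratified count already handles singletons correctly without removing them. (Your claim that removing the singletons preserves essentiality and the index is nonetheless true: a singleton contributes nothing to $L$ or to $G$, so for any proper $J'\subsetneq A'$ one has $\Def(J')=\Def(J'\cup\{\text{singletons}\})+s>-d+s=\Def(A')$.)
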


Here, and everywhere in this paper, by a generic point in algebraic variety $X$ parametrizing systems of Laurent polynomials we mean a point in $X\setminus \Sigma$ for a fixed subvariety $\Sigma$ of smaller dimension.

\begin{proof}
By Proposition~\ref{nconsist} there exists a subcollection $I$ of $A$ of size $n$ with $d(I)=0$. Without loss of generality let us assume that $I=\{1,\ldots,n\}$. The space $\Omega_A$ of polynomials with supports in $A$ could be thought as a product
$$
\Omega_A=\Omega_I\times \Omega_{I^c},
$$ 
where $\Omega_I$ and $\Omega_{I^c}$ are the spaces of systems of Laurent polynomials with supports in $I$ and $I^c$ respectively. Let $p: \Omega_A \to \Omega_I$ be the natural projection on the first factor.

By the Bernstein criterion the subsystem $f_1=\ldots=f_n=0$ is generically consistent. Moreover, the BKK Theorem asserts that the generic number of solutions in $(\C^*)^n$ is $n!Vol(\Delta_1,\ldots,\Delta_n)$, where $\Delta_i$ is the convex hull of $\cA_i$, and in particular is finite. Let us denote by $\Omega^{gen}_I\subset \Omega_I$ the Zariski open subset of systems $f_1=\ldots=f_n=0$ with exactly $n!Vol(\Delta_1,\ldots,\Delta_n)$ roots.

For each point $\bold f_I \in \Omega^{gen}_I$ the preimage $p^{-1}(\bold f_I)$ of the projection $p$ restricted to the set of consistency $X_A$ is a union of $n!Vol(\Delta_1,\ldots,\Delta_n)$ vector spaces $V_j(\bold f_I)$'s of dimension $|\cA_{n+1}|+\ldots+|\cA_{n+d}|-d$ each. The intersection of any two of these vector spaces has smaller dimension for generic $\bold f_I \in \Omega^{gen}_I$. Indeed, since $G(A)=\Z^n$ and $A$ is essential, the ussumptions of Theorem~\ref{Tindepen} are satisfied for the collection $I$ and subgroup $\ker(I^c)$ of  $(\C^*)^n$. Hence $I$ is $\ker(I^c)$-independent by Theorem~\ref{Tindepen}.

 Denote by $X'_A\subset X_A$  the set of points which belongs to exactly one of the $V_j(\bold f_I)$'s. By construction, the dimension of $X'_A$ is equal to $|\cA_{1}|+\ldots+|\cA_{n+d}|-d = \dim(X_A)$. Since $X_A$ is irreducible, the  complement $\Sigma=X_A\setminus X'_A$ is an algebraic subvariety of smaller dimension. But for any $\bold f\in X'_A$ the zero set $Y_{\bold f}$ is a single point, so the theorem is proved.
\end{proof}

\begin{Corollary}\label{ess}
Let $A=(\cA_1,\ldots,\cA_{k})$ be an essential collection of finite subsets of $\Z^n$ of defect $d(A)=\Def(A)=-d$. Then for the generic  $\bold f\in X_A \subset \Omega_A$ the zero set $Y_{\bold f}$ is a finite disjoint union of $ind(A)$ subtori of dimension $n-k+d$ which are different by a multiplication by elements of $(\C^*)^n$.
\end{Corollary}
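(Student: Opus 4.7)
The plan is to reduce the corollary to Theorem~\ref{singpo} by descending to an intrinsic quotient torus on which the translated collection has index $1$ and matches the hypotheses of that theorem.

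First I would pick base points $a_i^0\in \cA_i$ and replace $f_i$ by $g_i(x)=x^{-a_i^0}f_i(x)$. On $(\C^*)^n$ this does not alter the zero set, but the support of $g_i$ now lies in $\cA_i-a_i^0\subset G(A)$. Setting $H=\ker(G(A))$ with identity component $H_0=\ker(\Lambda(A))$, consider the quotient $\pi:(\C^*)^n\to T'=(\C^*)^n/H$. The torus $T'$ has character lattice $G(A)$ and dimension $\dim L(A)=k-d$, and each $g_i$ descends to a Laurent polynomial $\bar g_i$ on $T'$.

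Next I would verify that the translated collection $\tilde A=(\cA_1-a_1^0,\ldots,\cA_k-a_k^0)$, viewed inside the character lattice $G(A)$ of $T'$, satisfies the hypotheses of Theorem~\ref{singpo}. Translation leaves the affine span of every Minkowski sum $\cA_J$ unchanged, so $\Def(J)$ is the same for $A$ and $\tilde A$; hence $\tilde A$ is essential of defect $-d$. Since $|\tilde A|=k=(k-d)+d$ and $\dim T'=k-d$, the numerical setup of Theorem~\ref{singpo} is matched, and the intrinsic index is $[G(A):G(A)]=1$ by construction. Theorem~\ref{singpo} then yields, for generic $\bar g=(\bar g_1,\ldots,\bar g_k)$ in $X_{\tilde A}$, a single point $q\in T'$ as zero set.

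Finally I would lift back. The translation identifies $\Omega_A$ with $\Omega_{\tilde A}$ as affine coordinate spaces and sends $X_A$ onto $X_{\tilde A}$, so for generic $\bold f\in X_A$ one has $Y_{\bold f}=\pi^{-1}(q)$, which is a single coset of $H$ in $(\C^*)^n$. This coset is the disjoint union of $|H/H_0|$ translates of $H_0$ by elements of $(\C^*)^n$, and Pontryagin duality gives $|H/H_0|=[\Lambda(A):G(A)]=ind(A)$. Each translate has dimension $\dim H_0=n-\dim L(A)=n-k+d$, producing exactly the claimed decomposition. The main obstacle I anticipate is the bookkeeping in the middle step — checking that after descent to $T'$ the translated collection really inherits essentiality and drops to index one, so that Theorem~\ref{singpo} applies verbatim. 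The lift-back and the count of connected components of $\pi^{-1}(q)$ then reduce to standard duality between sublattices of $\Z^n$ and subgroups of $(\C^*)^n$.
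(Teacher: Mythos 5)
Your proposal is correct and follows essentially the same route as the paper: translate each support into $G(A)$, descend to the quotient torus with character lattice $G(A)$ where the translated collection has index one and Theorem~\ref{singpo} applies, then pull the unique solution back through the quotient map and count the components of the kernel as $[\Lambda(A):G(A)]=\ind(A)$. You are a bit more explicit than the paper in checking that translation preserves defects and in using Pontryagin duality for the component count, but the underlying argument is identical.
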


\begin{proof}
The lattice $G(A)$ generated by all of the differences in $\cA_i$'s defines a torus $T\simeq (\C^*)^{k-d}$ for which $G(A)$ is the lattice of characters. The inclusion $G(A)\hookrightarrow \Z^n$ defines the homomorphism:
$$
p: (\C^*)^n \to T.
$$
The kernel of the homomorphism $p$ is the subgroup of $(\C^*)^n$ consisting of finite disjoint union of $ind(A)$ subtori of dimension $n-k+d$ which are different by a multiplication by elements of $(\C^*)^n$.

The multiplication of Laurent polynomials by monomials does not change the zero set of a system. For any $i$ let $\widetilde \cA_i$  be any translation of $\cA_i$ belonging to $G(J)$. We can think of $\widetilde \cA_i$ as support of a Laurent polynomial on $T$. We will denote by $\widetilde A$ the collection $(\widetilde \cA_1,\ldots,\widetilde \cA_{k})$ understood as a collection of supports of Laurent polynomials on the torus $T$. The collection $\widetilde A$ satisfies the assumptions of Theorem~\ref{singpo}. 

With a system $\bold f\in \Omega_A$ one can associate a system of Laurent polynomials $  \bold{\widetilde f}$ on $T$ in a way described above. The zero set of $Y_\bold f$ of a system $\bold f$ is given by
$$
Y_\bold f = p^{-1} (Y_{\bold{\widetilde f}})\,(\textnormal{in particular}\,Y_\bold f \simeq Y_{\bold{\widetilde f}}\times ker(p)),
$$
where $Y_{ \bold{\widetilde f}}$ is the zero set of the system $ \bold{\widetilde f}$ on $T$. By Theorem \ref{singpo} for the generic system $ \bold{\widetilde f} \in X_{\widetilde A} \subset \Omega_{\widetilde A}$ the zero set $Y_{ \bold{\widetilde f}}$ which finishes the proof.
\end{proof}

\subsection{General systems}

\begin{Theorem}\label{main}
Let $A=(\cA_1,\ldots,\cA_{k})$ be a collection of finite subsets of $\Z^n$ with the essential subcollection $J$. Then for the generic system $\bold f\in X_A\subset \Omega_A$ the zero set $Y_\bold f$ is a disjoint union of $ind(J)$  varieties $Y_1,\ldots, Y_{ind(J)}$ each of which is given by a $\Delta$-nondegenerate system with the same Newton polyhedra.
\end{Theorem}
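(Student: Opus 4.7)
The plan is to split the system $\bold f \in X_A$ into its essential subsystem $\bold f_J$ and the remaining equations $(f_i)_{i \in J^c}$, apply Corollary~\ref{ess} to analyze the zero set of $\bold f_J$, and then view the remaining equations as Newton polyhedra data on each component of that zero set.

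First, I would identify the generic point of $X_A$ under the decomposition $\Omega_A \simeq \Omega_J \times \Omega_{J^c}$. The inclusion $X_A \subseteq X_J \times \Omega_{J^c}$ is immediate, since a common zero of $\bold f$ is in particular a common zero of $\bold f_J$. To see that $X_A$ is dense in $X_J \times \Omega_{J^c}$, I would combine Sturmfels' codimension formula (using $\Def(J)=d(A)$) to obtain $\dim X_A = \dim X_J + \dim \Omega_{J^c}$, together with Lemma~\ref{compdef}(3), which gives $d(\pi_J(J^c))=0$ so that Bernstein's criterion produces common zeros on the quotient torus. A generic $\bold f \in X_A$ thus corresponds to a generic $\bold f_J \in X_J$ and a generic $(f_i)_{i \in J^c} \in \Omega_{J^c}$.

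By Corollary~\ref{ess}, for generic $\bold f_J \in X_J$ the zero set $Y_{\bold f_J}$ is a disjoint union $\bigsqcup_{a=1}^{ind(J)} t_a H$ of translates of the subtorus $H$ whose character lattice is $\Z^n/\Lambda(J)$. Restricting $f_i$ to $t_a H$ (for $i \in J^c$) yields a Laurent polynomial on $H$ with support $\pi_J(\cA_i)$ and $w$-coefficient $\sum_{v\in \cA_i,\ \pi_J(v)=w} c_v t_a^v$ (where $c_v$ are the coefficients of $f_i$). For each fixed translate this restriction map $\Omega_{\cA_i} \to \Omega_{\pi_J(\cA_i)}$ is a surjective linear map, so as $(f_i)_{i \in J^c}$ varies generically the restricted system on each translate is a generic system on $H$ with supports $\pi_J(J^c)$. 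Since $\Delta$-nondegeneracy is a Zariski open condition and the intersection of finitely many such is still Zariski open, for generic $\bold f$ every restricted system is $\Delta$-nondegenerate with Newton polyhedra $\operatorname{conv}(\pi_J(\cA_i))$, independent of $a$; hence $Y_{\bold f} = \bigsqcup_{a=1}^{ind(J)} Y_a$ as claimed.

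The main obstacle is justifying the identification of $X_A$ with a dense subset of $X_J \times \Omega_{J^c}$, which combines Sturmfels' codimension, Bernstein's consistency criterion on $\R^n/L(J)$, and surjectivity of the single-translate restriction maps. The degenerate case $d(A)=0$ with $J=\emptyset$ is worth noting: there $ind(J)=1$, $H=(\C^*)^n$, and the statement recovers the classical Newton polyhedra theory for generically consistent systems.
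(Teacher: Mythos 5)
Your proof follows the same route as the paper's: split $\bold f$ into the essential subsystem $\bold f_J$ and the remaining equations, apply Corollary~\ref{ess} to describe the zero set of $\bold f_J$ as a disjoint union of $\ind(J)$ translates of a subtorus, and then treat the restriction of the remaining $f_i$'s to each translate as a generic system with Newton polyhedra $\pi_J(\Delta_i)$. The paper's published proof is terse and leaves the crucial genericity transfer implicit; you have made it explicit, and correctly: the inclusion $X_A\subseteq X_J\times\Omega_{J^c}$ together with Sturmfels' codimension formula and $d(J)=\Def(J)=d(A)$ gives $\dim X_A=\dim X_J+\dim\Omega_{J^c}$, and since both sides are irreducible constructible sets this forces density, so a generic point of $X_A$ projects to a generic point of $X_J$ and a generic tuple in $\Omega_{J^c}$. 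Your observation that the single-translate restriction map $\Omega_{\cA_i}\to\Omega_{\pi_J(\cA_i)}$ is linear and surjective, so that $\Delta$-nondegeneracy on each of the finitely many translates is a nonempty open condition whose intersection stays nonempty, is exactly the missing step needed to conclude; the paper simply asserts nondegeneracy ``for the generic point.'' In short: same decomposition and same key lemma (Corollary~\ref{ess}), but your write-up supplies the density and surjectivity justifications that the paper compresses into one sentence.
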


Theorem~\ref{main} provides a solution for the problem of computing discrete invariants of the zero set of generic consistent system with generically inconsistent supports by reducing it to the classical Newton polyhedra theory. The concrete examples of applications of Theorem~\ref{main} are given in the next section.

\begin{proof}
Without loss of generality let us assume that $J=\{1,\ldots, l\}$. By Corollary \ref{ess} there exists a Zariski open subset  $X'_A\subset X_A$, such that for any $\bold f =(f_1,\ldots,f_k)\in X'_A$ the zero set of the system $f_1=\ldots=f_l=0$ is a finite disjoint union of $ind(J)$ subtori $V_1,\ldots,V_{ind(J)}$ which are different by a multiplication by an element of $(\C^*)^n$.

For the generic point  $\bold f =(f_1,\ldots,f_k)\in X'_A$ the restrictions of Laurent polynomials $f_{l+1},\ldots,f_k$ to each $V_i$ are non-degenerate Laurent polynomials with Newton polyhedra $\pi_J(\Delta_{l+1}),\ldots, \pi_J(\Delta_{k})$, respectively.
\end{proof}

\begin{Corollary}
For the generic system $\bold f\in X_A\subset \Omega_A$ the zero set $Y_\bold f$ is a non-degenerate complete intersection. That is $Y_\bold f$ is defined by $codim(Y_\bold f)$ equations with independent differentials.
\end{Corollary}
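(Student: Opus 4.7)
The plan is to combine the two structural features of $Y_{\bold f}$ supplied by Theorem~\ref{main}. After relabelling so that the essential subcollection is $J=\{1,\dots,l\}$, put $m=\dim L(J)$; the theorem says every component of $Y_{\bold f}$ lies in one of the $ind(J)$ subtori $V_j\subset (\C^*)^n$ of codimension $m$, and inside each $V_j$ the corresponding component $Y_j$ is cut out by a $\Delta$-nondegenerate system in the $k-l$ restricted polynomials $f_{l+1}|_{V_j},\dots,f_k|_{V_j}$.

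First I would use $\Delta$-nondegeneracy: at every $p\in Y_j$ the restricted differentials $d(f_{l+1}|_{V_j}),\dots,d(f_k|_{V_j})$ are linearly independent on $T_pV_j$. Pulled back to $(\C^*)^n$ this says that $df_{l+1},\dots,df_k$ at $p$ are linearly independent modulo the conormal space $N_p^*V_j$, which has dimension $m$. To control the full Jacobian of $\bold f$ at $p$ it then remains to analyse the essential differentials $df_1,\dots,df_l$.

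The main step, and the main obstacle, is to show that for generic $\bold f\in X_A$ these $l$ essential differentials at every $p\in Y_{\bold f}$ span the full conormal space $N_p^*V_j$. For this I would apply the reduction of Corollary~\ref{ess} to the essential system $\widetilde f_1,\dots,\widetilde f_l$ on the torus $T=(\C^*)^m$, where by Theorem~\ref{singpo} a generic consistent $\widetilde{\bold f}\in X_{\widetilde A}$ has a single solution $q\in T$. The incidence variety $\widetilde X_{\widetilde A}$ is smooth and irreducible (a vector bundle over $T$), and the projection $\pi_2\colon\widetilde X_{\widetilde A}\to X_{\widetilde A}$ has generic fibre a single point, so it is birational and hence an isomorphism over a Zariski open $U\subset X_{\widetilde A}$. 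Computing the tangent space to $\widetilde X_{\widetilde A}$ at a point $(q,\widetilde{\bold f})\in\pi_2^{-1}(U)$ identifies $\ker d\pi_2$ with $\ker(d\widetilde f_1,\dots,d\widetilde f_l)_q\subset T_qT$, so this kernel is trivial, which is exactly the statement that the $l$ differentials span the $m$-dimensional cotangent space $T_q^*T$. Pulling back along the quotient $p\colon (\C^*)^n\to T$ then transports this transversality to every point of each $V_j$.

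Putting the two steps together, the Jacobian of $(f_1,\dots,f_k)$ has rank exactly $m+(k-l)=k+\Def(J)=\mathrm{codim}(Y_{\bold f})$ at every point of $Y_{\bold f}$. Choosing any $\mathrm{codim}(Y_{\bold f})$ of the equations whose differentials at $p$ are linearly independent (for instance all of $f_{l+1},\dots,f_k$ together with a suitable $m$-element subset of $\{f_1,\dots,f_l\}$), these functions cut out $Y_{\bold f}$ transversally in a neighbourhood of $p$, exhibiting $Y_{\bold f}$ as a non-degenerate complete intersection of codimension $k+\Def(J)$.
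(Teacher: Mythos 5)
Your proof is correct, but it takes a genuinely different route from the paper's. The paper sidesteps the question of the essential differentials entirely: it cuts out each component $Y_i$ inside $V_i$ by the restricted polynomials $f_{l+1}|_{V_i},\ldots,f_k|_{V_i}$ (which are $\Delta$-nondegenerate by Theorem~\ref{main}), and then simply observes that the disjoint union of shifted subtori $V_1,\ldots,V_{ind(J)}$ can be defined by $codim(V_i)$ additional equations with independent differentials — for instance products over $j$ of the monomial equations $\chi_\alpha - c_\alpha^{(j)}$ defining the individual cosets, whose differentials at a point of $V_i$ reduce to nonzero multiples of the independent $d\chi_\alpha$ — giving a total of $codim(Y_\bold f)$ transverse equations. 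This is shorter, but only produces \emph{some} collection of defining equations, replacing $f_1,\ldots,f_l$ by auxiliary monomial ones. Your argument proves the sharper statement that a subset of the original $f_1,\ldots,f_k$ already cuts out $Y_\bold f$ transversally: reducing to the essential system $\widetilde{\bold f}$ on $T$, you use that $\pi_2$ from the smooth incidence variety $\widetilde X_{\widetilde A}$ is birational onto the closure of $X_{\widetilde A}$, hence an isomorphism over a dense open subset (this step deserves a word of justification — generic smoothness in characteristic $0$ plus quasi-finiteness, or Zariski's main theorem — but it is correct), so $\ker d\pi_2 = 0$ there, which identifies with the statement that $d\widetilde f_1,\ldots,d\widetilde f_l$ span $T_q^*T$. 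Since the $f_i$ with $i\in J$ agree with the pullbacks of $\widetilde f_i$ up to monomial factors and vanish along $V_j$, their differentials at $p\in V_j$ lie in and span $N_p^*V_j$; together with the restricted differentials for $i\notin J$ this gives rank $(l-d)+(k-l)=codim(Y_\bold f)$. Both approaches are valid; yours costs the extra tangent-space computation but yields the more informative conclusion that the nondegenerate complete intersection can be realised by the original polynomials.
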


\begin{proof}
Indeed, each of the components $Y_i\subset V_i$ of $Y_\bold f$ is defined by the  restrictions of Laurent polynomials $f_{l+1},\ldots,f_k$ to $V_i$, and hence is a non-degenerate complete intersection in $V_i$ for generic consistent system $\bold f$.  

But the union of shifted subtori $V_1,\ldots,V_{ind(J)}$ could be defined by the $codim (V_i)$ more independent equations in $(\C^*)^n$, which finishes the proof.
\end{proof}

\section{Discrete invariants}

Theorem~\ref{main} asserts that any discrete invariant which can be computed by means of the theory of Newton polyhedra could be also computed  for the zero set $Y_\bold f$ of generic consistent system with generically inconsistent supports. In this section we will give two examples of such calculations, but absolutely the same strategy is applicable to other discrete invariants such as Hodge--Deligne numbers, or the number of connected components (which were computed in the classical case in \cite{DKh} and \cite{Kh2} respectively).

Through all of this section by the volume on a vector space $V$ with a lattice $\Lambda$ inside we mean  the translation invariant volume normalized by the following condition: for any  $v_1,\ldots,v_{k}$  which are generators of the lattice $\Lambda$, the volume of the parallelepiped with sides $v_1,\ldots,v_{k}$ is equal to 1.

\begin{Theorem}[Number of roots]
Let $\cA_1,\ldots,\cA_{n+k}\subset \Z^n$ be such that $d(\cA)=-k$ and $J$ be the unique essential subcollection. Then the zero set $Y_\bold f$ of the generic consistent system has dimension 0, and the number of points in $Y_\bold f$ is equal to
 $$
 (n-\#J + k)! \cdot ind(J) \cdot Vol(\pi_J(\Delta_i)_{i\notin J}),
 $$
 where $\Delta_i$ is the convex hull of $\cA_i$ and $Vol$ is the mixed volume on $\R^n/L(J)$ normalized with respect to the lattice $\Z^n/\Lambda(J)$.
\end{Theorem}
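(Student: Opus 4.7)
The plan is to invoke Theorem~\ref{main} and then apply the classical Bernstein--Kouchnirenko--Khovanskii theorem on each irreducible piece.

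First, since $J$ is essential with $\Def(J)=d(A)=-k$, one has $\dim L(J)=\#J-k$, so the natural morphism $p\colon(\C^*)^n\to T$ to the torus $T$ with character lattice $G(J)$ has kernel equal to a disjoint union of $ind(J)$ subtori of dimension $n-\#J+k$. These are the shifted subtori $V_1,\ldots,V_{ind(J)}$ produced by Theorem~\ref{main}. By that theorem, for generic consistent $\bold f$, the remaining $n+k-\#J$ polynomials $f_j$ with $j\notin J$ restrict to each $V_i$ as a $\Delta$-nondegenerate system whose Newton polyhedra are $\pi_J(\Delta_j)\subset\R^n/L(J)$. Since the number of restricted equations equals $\dim V_i$, each $Y_i:=Y_{\bold f}\cap V_i$ is finite, so $\dim Y_{\bold f}=0$ and $Y_{\bold f}=Y_1\sqcup\cdots\sqcup Y_{ind(J)}$.

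Next, I compute $\#Y_i$ by BKK. Each $V_i$ is a torsor over an algebraic torus of dimension $n-\#J+k$ whose character lattice is the torsion-free quotient $\Z^n/\Lambda(J)$, and this is precisely the ambient lattice containing the projected supports $\pi_J(\cA_j)$. Applying BKK on $V_i$ therefore gives
\[
\#Y_i=(n-\#J+k)!\cdot Vol\bigl(\pi_J(\Delta_j)_{j\notin J}\bigr),
\]
where $Vol$ is the mixed volume on $\R^n/L(J)$ normalized with respect to $\Z^n/\Lambda(J)$. Summing over the $ind(J)$ components yields the stated formula.

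The main subtlety — essentially the only thing one has to keep track of — is distinguishing the two natural lattices $G(J)\subset\Lambda(J)$. The index $ind(J)$ enters as the number of connected components of $\ker p$ (equivalently, the order of the torsion subgroup of $\Z^n/G(J)$), not as a volume rescaling on any individual $V_i$, while the lattice used in the BKK normalization is the saturated one $\Lambda(J)$. Once this dictionary is in place, the theorem is an immediate consequence of Theorem~\ref{main} together with BKK applied componentwise.
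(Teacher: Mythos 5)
Your proof is correct and follows essentially the same route as the paper: invoke Theorem~\ref{main} to decompose $Y_{\bold f}$ into $ind(J)$ pieces cut out by $\Delta$-nondegenerate systems, then apply BKK on each piece and sum. The only cosmetic difference is that you establish $\dim Y_{\bold f}=0$ by matching the number of restricted equations to $\dim V_i$, whereas the paper computes $\dim(\widetilde X_A)-\dim(X_A)=0$ directly; both are valid and your added care about the two lattices $G(J)\subset\Lambda(J)$ is a useful clarification of the normalization in the statement.
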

If $k=0$ this theorem coincides with the BKK theorem. In the case $k=1$ the generic number of solution appears as the corresponding degree of $A$-resultant and was computed in \cite{D'AS}.

\begin{proof}
First note that for generic $\bold f \in X_\cA$ the dimension $\dim(Y_\bold f)$ is equal to $\dim(\widetilde X_\cA)-\dim(X_\cA)=0$. By Theorem~\ref{main} the generic zero set $Y_\bold f$ is a disjoint union of $ind(J)$ varieties $Y_1,\ldots, Y_{ind(J)}$ each of which is defined by generic system with Newton polyhedra  $\pi_J(\Delta_i)$ for $i\notin J$. By the BKK formula the number of points in $Y_i$ is finite and is equal to $(n-\#J + k)! Vol(\pi_J(\Delta_i)_{i\notin J}))$. Therefore, the number of points in $Y_\bold f$ is
$$
|Y_\bold f| = \sum_{i=1}^{ind(J)} |Y_i|= (n-\#J + k)! \cdot ind(J) \cdot Vol(\pi_J(\Delta_i)_{i\notin J}).
$$
\end{proof}

For simplicity, we will formulate next theorem in the ``hypersurface'' case, i.e. when the essential subcollection contains all but one supports (the general case could be deduced similarly). 
\begin{Theorem}
Let $\cA_1,\ldots,\cA_{k}\subset \Z^n$ be such that $d(\cA)<0$ and let $J=\{2,\ldots,k\}$ be the unique essential subcollection. Then the Euler characteristic and the geometric genus of the zero set $Y_\bold f$ of the generic consistent system is given by
 $$
 \chi(Y_\bold f) = (-1)^{n-dim(J)-1}(n-dim(J))! \cdot ind(J) Vol(\pi_J(\Delta_1)),
 $$
  $$
 g(Y_\bold f)=ind(J)\left( B^+(\pi_J(\Delta_1))\right),
 $$
 where $\Delta_1$ is the convex hull of $\cA_1$, $Vol$ is the volume on $\R^n/L(J)$ normalized with respect to the lattice $\Z^n/\Lambda(J)$, and $B^+(\Delta)$ is the number of integral point in the interior of $\Delta$.
\end{Theorem}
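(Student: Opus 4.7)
The plan is to directly combine Theorem~\ref{main} with the classical results of Khovanskii on Euler characteristics and geometric genera of $\Delta$-nondegenerate hypersurfaces in a torus. The hypersurface hypothesis $J=\{2,\ldots,k\}$ makes the after-reduction system a single equation, which is exactly the setting of those classical formulas.

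First I would invoke Theorem~\ref{main} to write $Y_{\bold f}=Y_1\sqcup\cdots\sqcup Y_{ind(J)}$ as a disjoint union, where each $Y_i$ sits inside a shifted subtorus $V_i\subset(\C^*)^n$ of dimension $n-\dim L(J)=n-\dim(J)$, and is cut out by a single $\Delta$-nondegenerate Laurent polynomial on $V_i$ with Newton polyhedron $\pi_J(\Delta_1)\subset \R^n/L(J)$. The $V_i$ are translates of one another and are isomorphic as algebraic tori (the ambient lattice being $\Z^n/\Lambda(J)$, which is exactly the lattice used to normalize $Vol$ and to count interior points $B^+$).

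Next I would apply Khovanskii's formula for the Euler characteristic of a $\Delta$-nondegenerate hypersurface in a torus of dimension $m=n-\dim(J)$: such a hypersurface has Euler characteristic $(-1)^{m-1}m!\,Vol(\pi_J(\Delta_1))$, where the volume is normalized with respect to the lattice in question. Since $\chi$ is additive on disjoint unions of constructible sets, summing over the $ind(J)$ identical contributions yields
\[
\chi(Y_{\bold f})=(-1)^{n-\dim(J)-1}(n-\dim(J))!\cdot ind(J)\cdot Vol(\pi_J(\Delta_1)),
\]
as claimed.

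For the geometric genus I would apply Khovanskii's second formula: the geometric genus of a $\Delta$-nondegenerate hypersurface in a torus equals the number $B^+(\Delta)$ of integer points in the relative interior of its Newton polytope (computed with respect to the lattice in the ambient character group). The geometric genus of a disjoint union of smooth irreducible varieties is the sum of their genera, so
\[
g(Y_{\bold f})=ind(J)\cdot B^+(\pi_J(\Delta_1)).
\]
The main subtlety is bookkeeping rather than a genuine obstacle: one must check that the lattice on $V_i$ induced from $\Z^n$ under the projection to $T$ and the inclusion $V_i\hookrightarrow(\C^*)^n$ matches $\Z^n/\Lambda(J)$, so that the normalizations of $Vol$ and $B^+$ agree with those already used in Khovanskii's theorems, and that each $V_i$ contributes the same count (which is automatic because the $V_i$ differ only by multiplication by elements of $(\C^*)^n$, an operation that preserves both $Vol$ and $B^+$).
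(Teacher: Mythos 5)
Your proof is correct and follows essentially the same route as the paper: decompose $Y_{\bold f}$ via Theorem~\ref{main} into $ind(J)$ hypersurfaces in shifted subtori, apply Khovanskii's formulas for the Euler characteristic and geometric genus of a $\Delta$-nondegenerate hypersurface to each piece, and sum by additivity. Your extra remark on matching the lattice normalizations is a sound sanity check that the paper leaves implicit.
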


\begin{proof}
Indeed, by Theorem~\ref{main} the generic zero set $Y_\bold f$ is a disjoint union of $ind(J)$ varieties $Y_1,\ldots, Y_{ind(J)}$ each of which is defined by a generic equation with a Newton polyhedra $\pi_J(\Delta_1)$. Therefore, the Euler characteristic of $Y_i$ is given by $\chi(Y_i)= (-1)^{n-dim(J)-1}(n-dim(J))!\cdot Vol(\pi_J(\Delta_1))$ (see \cite{Kh1}), and the geometric genus of $Y_i$ is given by $g(Y_i)=B^+(\pi_J(\Delta_1))$ (see \cite{Kh}). The theorem follows from the additivity of the Euler characteristic and the geometric genus.
\end{proof}

\end{document}